\newcommand{\RR}{\mathbb{R}}
\newcommand{\EE}{\mathbb{E}}
\newcommand{\dist}{\mathbf{dist}}
\mathchardef\mhyphen="2D 
\newtheorem{theorem}{Theorem}
\newtheorem{lemma}[theorem]{Lemma}
\newtheorem{proposition}[theorem]{Proposition}
\newtheorem{corollary}[theorem]{Corollary}
\let\originalleft\left
\let\originalright\right
\renewcommand{\left}{\mathopen{}\mathclose\bgroup\originalleft}
\renewcommand{\right}{\aftergroup\egroup\originalright}
\begin{document}

	\title{Convergence Rates for Deterministic and Stochastic Subgradient Methods Without Lipschitz Continuity}
	\author{Benjamin Grimmer\footnote{bdg79@cornell.edu; Cornell University, Ithaca NY \newline This material is based upon work supported by the National Science Foundation Graduate Research Fellowship under Grant No. DGE-1650441. This work was done while the author was visiting the Simons Institute for the Theory of Computing. It was partially supported by the DIMACS/Simons Collaboration on Bridging Continuous and Discrete Optimization through NSF grant \#CCF-1740425.}}
	\date{}
	\maketitle

	\begin{abstract}
		We extend the classic convergence rate theory for subgradient methods to apply to non-Lipschitz functions. For the deterministic projected subgradient method, we present a global $O(1/\sqrt{T})$ convergence rate for any convex function which is locally Lipschitz around its minimizers. This approach is based on Shor's classic subgradient analysis and implies generalizations of the standard convergence rates for gradient descent on functions with Lipschitz or H\"older continuous gradients. Further, we show a $O(1/\sqrt{T})$ convergence rate for the stochastic projected subgradient method on convex functions with at most quadratic growth, which improves to $O(1/T)$ under either strong convexity or a weaker quadratic lower bound condition.
	\end{abstract}

	\section{Introduction}
We consider the nonsmooth, convex optimization problem given by
$$ \min_{x\in Q} f(x)$$
for some lower semicontinuous convex function $f \colon \RR^d \rightarrow \RR\cup\{\infty\}$ and closed convex feasible region $Q$. We assume $Q$ lies in the domain of $f$ and that this problem has a nonempty set of minimizers $X^*$ (with minimum value denoted by $f^*$). Further, we assume orthogonal projection onto $Q$ is computationally tractable (which we denote by $P_Q(\cdot)$).

Since $f$ may be nondifferentiable, we weaken the notion of gradients to subgradients. The set of all subgradients at some $x\in Q$ (referred to as the subdifferential) is denoted by
$$\partial f(x) = \{ g\in\RR^d \mid \left(\forall y \in \RR^d\right) \; f(y) \geq f(x) + g^T(y-x)\}.$$
We consider solving this problem via a (potentially stochastic) projected subgradient method. These methods have received much attention lately due to their simplicity and scalability; see~\cite{Bubeck-textbook,Nesterov-introductory}, as well as~\cite{HazanKale2014,Lacoste-Julien2012,Lu2017,Nedic2014,Rakhlin2012} for a sample of more recent works. 

Deterministic and stochastic subgradient methods differ in the type of oracle used to access the subdifferential of $f$. For deterministic methods, we consider an oracle $g(x)$, which returns an arbitrary subgradient at $x$. For stochastic methods, we utilize a weaker, random oracle $g(x;\xi)$, which is an unbiased estimator of a subgradient (i.e., $\EE_{\xi\sim D}\ g(x;\xi) \in \partial f(x)$ for some easily sampled distribution $D$).

We analyze two classic subgradient methods, differing in their step size policy.
Given a deterministic oracle, we consider the following normalized subgradient method
\begin{equation} \label{eq:Deterministic-Subgradient-Method}
x_{k+1} := P_Q\left(x_k -\alpha_k \frac{g(x_k)}{\|g(x_k)\|}\right),
\end{equation}
for some positive sequence $\alpha_k$. Note that since $\|g(x_k)\|=0$ only if $x_k$ minimizes $f$, this iteration is well-defined until a minimizer is found.
Given a stochastic oracle, we consider the following method
\begin{equation} \label{eq:Stochastic-Subgradient-Method}
x_{k+1} := P_Q\left(x_k -\alpha_k g(x_k ; \xi_k)\right),
\end{equation}
for some positive sequence $\alpha_k$ and i.i.d.\ sample sequence $\xi_k\sim D$.

The standard convergence bounds for these methods assume some constant $L>0$ has $\|g(x)\|\leq L$ or $\EE_{\xi} \|g(x;\xi)\|^2 \leq L^2$ for all $x\in Q$. Then after $T>0$ iterations, a point is found with objective gap (in expectation for \eqref{eq:Stochastic-Subgradient-Method}) bounded by
\begin{equation}\label{eq:classic-subgradient-rate}
	f(x) - f^* \leq O\left(\frac{L \|x_0-x^*\|}{\sqrt{T}}\right),
\end{equation}
for any $x^*\in X^*$ under reasonable selection of the step size $\alpha_k$.

The bound $\|g(x)\|\leq L$ for all $x\in Q$ is implied by $f$ being $L$-Lipschitz continuous on some open convex set $U$ containing $Q$ (which is often the assumption made). Uniformly bounding subgradients restricts the classic convergence rates to functions with at most linear growth (at rate $L$).
When $Q$ is bounded, one can invoke a compactness argument to produce a uniform Lipschitz constant. However, such an approach may lead to a large constant heavily dependent on the size of $Q$ (and frankly, lacks the elegance that such a fundamental method deserves).

In stark contrast to these limitations,
early in the development of subgradient methods Shor~\cite{Shor1985} observed that the normalized subgradient method \eqref{eq:Deterministic-Subgradient-Method} enjoys some form of convergence guarantee for any convex function with a nonempty set of minimizers. Shor showed for any minimizer $x^*\in X^*$: some $0\leq k\leq T$ has either $x_k\in X^*$ or
$$ \left(\frac{g(x_k)}{\|g(x_k)\|}\right)^T(x_k - x^*) \leq O\left(\frac{\|x_0-x^*\|}{\sqrt{T}}\right),$$
under reasonable selection of the step size $\alpha_k$.
Thus for any convex function, the subgradient method has convergence in terms of this inner product value (which convexity implies is always nonnegative). This quantity can be interpreted as the distance from the hyperplane $\{x\ |\ g(x_k)^T(x-x_k)=0\}$ to $x^*$. By driving this distance to zero via proper selection of $\alpha_k$, Shor characterized the asymptotic convergence of \eqref{eq:Deterministic-Subgradient-Method}.

There is a substantial discrepancy in generality between the standard convergence bound \eqref{eq:classic-subgradient-rate} and Shor's result.
In this paper, we address this for both deterministic and stochastic subgradient methods. The remainder of this section formally states our generalized convergence rate bounds. For the deterministic case our bounds follow directly from Shor's result, while the stochastic case requires an alternative approach. Then Section \ref{sec:examples} applies these bounds to a few common problem classes outside the scope of uniform Lipschitz continuity. Finally, our convergence analysis is presented in Section~\ref{sec:proofs} and an extension on our model is discussed in Section~\ref{sec:extensions}. 

\subsection{Extended Deterministic Convergence Bounds}
Shor's convergence guarantees for general convex functions will serve as the basis of our objective gap convergence rates for the subgradient method \eqref{eq:Deterministic-Subgradient-Method} without assuming uniform Lipschitz continuity. Formally, Shor~\cite{Shor1985} showed the following general guarantee for any sequence of step sizes $\alpha_k$ (for completeness, an elementary proof is provided in Section~\ref{sec:proofs}).
\begin{theorem}[Shor's Hyperplane Distance Convergence]\label{thm:shor}
	Consider any convex $f$ and fix some $x^*\in X^*$. The iteration \eqref{eq:Deterministic-Subgradient-Method} has some $0\leq k\leq T$ with either $x_{k}\in X^*$ or
	\begin{equation} \label{eq:shor-bound}
		\left(\frac{g(x_k)}{\|g(x_k)\|}\right)^T(x_k - x^*) \leq \frac{\|x_0-x^*\|^2 + \sum_{k=0}^T \alpha_k^2}{2\sum_{k=0}^T \alpha_k}.
	\end{equation}
\end{theorem}

The classic objective gap convergence of the subgradient method follows as a simple consequence of this. Indeed, the convexity of $f$ and classically assumed bound $\|g(x_k)\|\leq L$ together with \eqref{eq:shor-bound} imply
$$ \min_{k=0\dots T}\left\{\frac{f(x_k) - f^*}{L}\right\} \leq \frac{\|x_0-x^*\|^2 + \sum_{k=0}^T \alpha_k^2}{2\sum_{k=0}^T \alpha_k}.$$
For example, taking $\alpha_k = \|x_0-x^*\|/\sqrt{T+1}$ for any fixed $T>0$ gives \eqref{eq:classic-subgradient-rate}.

Here Lipschitz continuity enabled us to convert a bound on ``hyperplane distance to a minimizer'' into a bound on the objective gap. Our extended convergence bounds for the deterministic subgradient method follow from observing that much more general assumptions than uniform Lipschitz continuity suffice to provide such a conversion.
In particular, we assume there is an upper bound on $f$ of the form
\begin{equation}\label{eq:deterministic-generalization}
	f(x) - f^* \leq \mathcal{D}(\|x - x^*\|), \hskip0.5cm (\forall x\in Q)
\end{equation}
for some fixed $x^*\in X^*$ and nondecreasing nonnegative function $\mathcal{D}\colon \RR_{+} \rightarrow \RR_{+}\cup\{\infty\}$. 
In this case, we find the following objective gap convergence guarantee.
\begin{theorem}[Extended Deterministic Rate] \label{thm:extended-deterministic-subgradient-rate}
	Consider any convex $f$ satisfying \eqref{eq:deterministic-generalization}. The iteration \eqref{eq:Deterministic-Subgradient-Method} satisfies
	$$ \min_{k=0\dots T}\left\{ f(x_k) - f^*\right\} \leq \mathcal{D}\left(\frac{\|x_0-x^*\|^2 + \sum_{k=0}^T \alpha_k^2}{2\sum_{k=0}^T \alpha_k}\right).$$
	For example, under the constant step size $\alpha_k = \|x_0-x^*\|/\sqrt{T+1}$, the iteration \eqref{eq:Deterministic-Subgradient-Method} satisfies
	$$ \min_{k=0\dots T}\left\{ f(x_k) - f^*\right\} \leq \mathcal{D}\left(\frac{\|x_0-x^*\|}{\sqrt{T+1}}\right).$$
\end{theorem}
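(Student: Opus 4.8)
The plan is to treat Theorem~\ref{thm:shor} as a black box and to concentrate all the work on converting its ``hyperplane distance'' guarantee into an objective-gap guarantee, exactly as uniform Lipschitz continuity was used to pass from \eqref{eq:shor-bound} to the classic rate \eqref{eq:classic-subgradient-rate}. Write $B$ for the common right-hand side $\frac{\|x_0-x^*\|^2+\sum_{k=0}^T\alpha_k^2}{2\sum_{k=0}^T\alpha_k}$. Applying Theorem~\ref{thm:shor} to the fixed minimizer $x^*$ produces an index $0\le k\le T$ for which either $x_k\in X^*$ or $\big(g(x_k)/\|g(x_k)\|\big)^T(x_k-x^*)\le B$. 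Since the target bound only constrains $\min_{k}\{f(x_k)-f^*\}$, it suffices to show that this single index $k$ already satisfies $f(x_k)-f^*\le\mathcal D(B)$.

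I would dispose of the first case immediately: if $x_k\in X^*$ then $f(x_k)-f^*=0\le\mathcal D(B)$, using only that $\mathcal D$ is nonnegative. So assume the hyperplane bound holds, and abbreviate $\hat g=g(x_k)/\|g(x_k)\|$ and $t=\hat g^T(x_k-x^*)$, so that $0\le t\le B$; here nonnegativity of $t$ is the elementary consequence of convexity already noted in the introduction, since $x^*$ cannot lie strictly on the ascent side of the supporting hyperplane at $x_k$.

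The heart of the argument is to convert the scalar $t$ into a gap bound by exhibiting one well-chosen comparison point. I would set $w=x^*+t\,\hat g$, the orthogonal projection of $x^*$ onto the hyperplane $\{x:g(x_k)^T(x-x_k)=0\}$. Two short computations drive everything: first, $g(x_k)^T(w-x_k)=0$ by construction, so the subgradient inequality at $x_k$ gives $f(w)\ge f(x_k)+g(x_k)^T(w-x_k)=f(x_k)$; second, $\|w-x^*\|=t\,\|\hat g\|=t$. Feeding $w$ into the growth assumption \eqref{eq:deterministic-generalization} and using that $\mathcal D$ is nondecreasing then yields
$$f(x_k)-f^*\le f(w)-f^*\le\mathcal D(\|w-x^*\|)=\mathcal D(t)\le\mathcal D(B),$$
which is the claimed bound. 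The specialization to $\alpha_k=\|x_0-x^*\|/\sqrt{T+1}$ is then a one-line computation: $\sum_{k=0}^T\alpha_k=\sqrt{T+1}\,\|x_0-x^*\|$ and $\sum_{k=0}^T\alpha_k^2=\|x_0-x^*\|^2$, so $B=\|x_0-x^*\|/\sqrt{T+1}$.

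The step I expect to be delicate is the invocation of \eqref{eq:deterministic-generalization} at $w$. The construction guarantees $\|w-x^*\|=t\le B$ and $f(w)\ge f(x_k)$, but $w$ lies on the supporting hyperplane and need not sit inside $Q$, whereas the growth bound is stated over $Q$; this is exactly where the interplay between the feasible region and the global behavior of $f$ matters, and it is the sole bridge from Shor's normalized-inner-product guarantee to a genuine objective-gap rate (for unconstrained problems, where $Q=\RR^d$, the issue evaporates). I would therefore take care that the comparison point stays inside the region on which the growth bound is available — either by arguing $w$ remains in $Q$ in the relevant setting, or by availing myself of the value of $f$ at $w$ directly — rather than attempting to realize the distance $t$ by a point of $Q$, which can genuinely fail to exist.
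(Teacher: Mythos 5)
Your argument is the paper's own proof: the point $w = x^* + t\,\hat g$ you construct is precisely what the paper calls ``the closest point $y$ in $\{x \mid g(x_k)^T(x-x_k)=0\}$ to $x^*$,'' and the three ingredients — Shor's theorem, the subgradient inequality giving $f(w)\geq f(x_k)$, and the growth bound applied at $w$ — are identical, as is the constant-step specialization.

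The caveat you raise at the end, however, is not just a delicate point to be careful about: it is a genuine gap, and it is present in the paper's own proof as well, which applies \eqref{eq:deterministic-generalization} at $y$ without checking $y\in Q$. In fact, if the growth bound is assumed only on $Q$, the theorem as stated is false. Take $Q=\{x\in\RR^2 : x_2=0\}$, $f(x)=|x_1|+M|x_2|$, $x_0=(1,0)$, $x^*=(0,0)$, and the oracle returning $g(x)=(1,M)\in\partial f(x)$ at points with $x_1>0$. On $Q$ the growth bound holds with $\mathcal{D}(t)=t$, yet each normalized step decreases $x_1$ by only $\alpha_k/\sqrt{1+M^2}$ (projection annihilates the second coordinate), so with constant step $\alpha_k=1/\sqrt{T+1}$ and $M\geq 2\sqrt{T+1}$ every iterate $k\leq T$ has $f(x_k)-f^*\geq 1/2$, far above the claimed bound $\mathcal{D}\left(1/\sqrt{T+1}\right)=1/\sqrt{T+1}$ once $T\geq 4$. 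Shor's guarantee is not violated — the hyperplane distances $x_1^{(k)}/\sqrt{1+M^2}$ are tiny — but it cannot be converted into an objective gap using information about $f$ on $Q$ alone, exactly because the projection point $w$ falls outside $Q$ and $f$ grows steeply there. So the correct reading of the hypothesis (consistent with all of the paper's applications, which establish \eqref{eq:deterministic-generalization} on all of $\RR^d$) is that the growth bound must hold at the hyperplane projection points, e.g.\ on all of $\RR^d$; your observation that the issue evaporates when $Q=\RR^d$ is exactly right, and so is your instinct that the distance $t$ cannot in general be realized by a point of $Q$. You spotted something the paper glosses over.
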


Note that any $L$-Lipschitz continuous function satisfies this growth bound with $\mathcal{D}(t) = Lt$. Thus we immediately recover the standard $L\|x_0-x^*\|/\sqrt{T}$ convergence rate. However, using growth bounds allows us to apply our convergence guarantees to many problems outside the scope of uniform Lipschitz continuity. 

Theorem~\ref{thm:extended-deterministic-subgradient-rate} also implies the classic convergence rate for gradient descent on differentiable functions with an $L$-Lipschitz continuous gradient of $O(L\|x_0-x^*\|^2/T)$ \cite{Nesterov-introductory}. Any such function has growth bounded by $\mathcal{D}(t) = Lt^2/2$ on $Q=\RR^d$ (see Lemma~\ref{lem:Lipschitz-Gradient-To-Steepness}). Then a convergence rate immediately follows from Theorem~\ref{thm:extended-deterministic-subgradient-rate} (for simplicity, we consider constant step size).
\begin{corollary}[Generalizing Gradient Descent's Convergence] \label{cor:generalized-gradient-descent}
	Consider any convex function $f$ satisfying \eqref{eq:deterministic-generalization} with $\mathcal{D}(t) = Lt^2/2$. Then under the constant step size $\alpha_k = \|x_0-x^*\|/\sqrt{T+1}$, the iteration \eqref{eq:Deterministic-Subgradient-Method} satisfies
	\begin{equation*} 
	\min_{k=0 \dots T}\left\{f(x_k)-f^*\right\} \leq \frac{L \|x_0-x^*\|^2}{2(T+1)}.
	\end{equation*}
\end{corollary}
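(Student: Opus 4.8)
The plan is to obtain this as a direct specialization of Theorem~\ref{thm:extended-deterministic-subgradient-rate}, since the corollary's hypothesis is precisely that $f$ satisfies the growth bound \eqref{eq:deterministic-generalization} with the particular choice $\mathcal{D}(t) = Lt^2/2$. First I would confirm that this $\mathcal{D}$ meets the standing requirements on the growth function, namely that it be nonnegative and nondecreasing on $\RR_+$. For $t \geq 0$ we have $Lt^2/2 \geq 0$, and since $L \geq 0$ the map $t \mapsto Lt^2/2$ is nondecreasing on the nonnegative reals, so the hypotheses of Theorem~\ref{thm:extended-deterministic-subgradient-rate} are in force for this $\mathcal{D}$.

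With admissibility checked, I would invoke the constant-step-size conclusion of Theorem~\ref{thm:extended-deterministic-subgradient-rate}, which under $\alpha_k = \|x_0-x^*\|/\sqrt{T+1}$ gives
$$ \min_{k=0\dots T}\left\{f(x_k)-f^*\right\} \leq \mathcal{D}\left(\frac{\|x_0-x^*\|}{\sqrt{T+1}}\right). $$
It then remains only to evaluate the right-hand side. Substituting $t = \|x_0-x^*\|/\sqrt{T+1}$ into $\mathcal{D}(t) = Lt^2/2$ produces $L\|x_0-x^*\|^2/(2(T+1))$, which is exactly the claimed bound.

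Since every step is a substitution into an already-established result, I do not expect a genuine obstacle here; the only thing requiring care is the verification that the chosen $\mathcal{D}$ is admissible, so that Theorem~\ref{thm:extended-deterministic-subgradient-rate} applies, which it plainly is. The conceptual work — converting Shor's hyperplane-distance guarantee (Theorem~\ref{thm:shor}) into an objective-gap bound through the growth function $\mathcal{D}$ — has already been carried out in proving Theorem~\ref{thm:extended-deterministic-subgradient-rate}, so nothing beyond this final algebraic evaluation is needed.
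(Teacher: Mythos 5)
Your proposal is correct and matches the paper's own derivation exactly: the paper obtains this corollary by substituting $\mathcal{D}(t) = Lt^2/2$ into the constant-step-size bound of Theorem~\ref{thm:extended-deterministic-subgradient-rate}, which is precisely your argument. Your added check that this $\mathcal{D}$ is nonnegative and nondecreasing is a harmless (and correct) bit of diligence the paper leaves implicit.
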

Thus a convergence rate of $O(L \|x_0-x^*\|^2/T)$ can be attained without any mention of smoothness or differentiability. In Section~\ref{sec:examples}, we provide a similar growth bound and thus objective gap convergence for any function with a H\"older continuous gradient, which also parallels the standard rate for gradient descent.
In general, for any problem with $\lim_{t\rightarrow 0^+}\mathcal{D}(t)/t = 0$, Theorem~\ref{thm:extended-deterministic-subgradient-rate} gives convergence at a rate of $o(1/\sqrt{T})$.

Suppose that $\mathcal{D}(t)/t$ is finite in some neighborhood of $0$ (as is the case for any $f$ that is locally Lipschitz around $x^*$).  Then simple limiting arguments give the following eventual convergence rate of \eqref{eq:Deterministic-Subgradient-Method} based on Theorem~\ref{thm:extended-deterministic-subgradient-rate}: For any $\epsilon>0$, there exists $T_0>0$, such that all $T>T_0$ have
\begin{equation*}
\mathcal{D}\left(\frac{\|x_0-x^*\|}{\sqrt{T+1}}\right) \leq \left(\limsup_{t\rightarrow 0^+}\frac{D(t)}{t} +\epsilon\right)\frac{\|x_0-x^*\|}{\sqrt{T+1}}.
\end{equation*}
As a result, the asymptotic convergence rate of \eqref{eq:Deterministic-Subgradient-Method} is determined entirely by the rate of growth of $f$ around its minimizers, and conversely, steepness far from optimality plays no role in the asymptotic behavior. 

\subsection{Extended Stochastic Convergence Bounds}
Now we turn our attention to giving more general convergence bounds for the stochastic subgradient method. This is harder as we can no longer leverage Shor's result, since normalizing stochastic subgradients may introduce bias or may not be well-defined if $g(x_k;\xi)=0$. As a result, we need a different approach to generalizing the standard stochastic assumptions.

We begin by reviewing the standard convergence results for this method.
\begin{theorem}[Classic Stochastic Rate] \label{thm:classic-stochastic-subgradient-rate}
	Consider any convex function $f$ and stochastic subgradient oracle satisfying $\EE_{\xi} \|g(x;\xi)\|^2 \leq L^2$ for all $x\in Q$. Fix some $x^*\in X^*$. Then for any positive sequence $\alpha_k$, the iteration \eqref{eq:Stochastic-Subgradient-Method} satisfies
	\begin{equation*}
	\EE_{\xi_{0 \dots T}}\left[f\left(\frac{\sum_{k=0}^{T}\alpha_kx_k}{\sum_{k=0}^{T}\alpha_k}\right)-f^*\right] \leq \frac{\|x_0-x^*\|^2 + L^2\sum_{k=0}^{T} \alpha_k^2}{2\sum_{k=0}^{T} \alpha_k}.
	\end{equation*}	
	For example, under the constant step size $\alpha_k = \|x_0-x^*\|/L\sqrt{T+1}$, the iteration \eqref{eq:Stochastic-Subgradient-Method} satisfies
	\begin{equation*}
	\EE_{\xi_{0 \dots T}}\left[f\left(\frac{1}{T+1}\sum_{k=0}^{T}x_k\right)-f^*\right]  \leq \frac{L\|x_0-x^*\|}{\sqrt{T+1}}.
	\end{equation*}
\end{theorem}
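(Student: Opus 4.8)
The plan is to run the standard ``distance to optimum'' energy argument: I control how far $x_{k+1}$ drifts from the fixed minimizer $x^*$ in a single step, sum the resulting recursion telescopically, and finally invoke Jensen's inequality to transfer the bound from the individual iterates $x_k$ to their weighted average. First I would exploit that orthogonal projection onto the convex set $Q$ is nonexpansive: since $x^*\in Q$, we have $\|x_{k+1}-x^*\| = \|P_Q(x_k-\alpha_k g(x_k;\xi_k))-x^*\| \leq \|x_k - \alpha_k g(x_k;\xi_k)-x^*\|$. Squaring and expanding the right-hand side gives
$$\|x_{k+1}-x^*\|^2 \leq \|x_k-x^*\|^2 - 2\alpha_k\, g(x_k;\xi_k)^T(x_k-x^*) + \alpha_k^2\|g(x_k;\xi_k)\|^2.$$

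The crux of the argument, and the one place the stochastic setting genuinely differs from the deterministic one, is the handling of the cross term once we take expectations. I would condition on the history $\xi_0,\dots,\xi_{k-1}$, which determines $x_k$ but is independent of the fresh sample $\xi_k$. Unbiasedness then gives $\EE_{\xi_k}[g(x_k;\xi_k)] \in \partial f(x_k)$, so the subgradient inequality yields $\EE_{\xi_k}[g(x_k;\xi_k)^T(x_k-x^*)] \geq f(x_k)-f^*$. Combining this with the second-moment bound $\EE_{\xi_k}\|g(x_k;\xi_k)\|^2 \leq L^2$ and taking the full expectation over $\xi_0,\dots,\xi_k$ converts the one-step inequality into
$$2\alpha_k\,\EE[f(x_k)-f^*] \leq \EE\|x_k-x^*\|^2 - \EE\|x_{k+1}-x^*\|^2 + \alpha_k^2 L^2.$$

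Summing this over $k=0,\dots,T$ telescopes the distance terms, and dropping the nonnegative final term $\EE\|x_{T+1}-x^*\|^2$ leaves $2\sum_{k=0}^T \alpha_k\,\EE[f(x_k)-f^*] \leq \|x_0-x^*\|^2 + L^2\sum_{k=0}^T\alpha_k^2$. Dividing by $2\sum_{k=0}^T\alpha_k$ bounds a \emph{weighted average} of the expected objective gaps. The final step is to bring this average inside $f$: since $f$ is convex and the weights $\alpha_k/\sum_j\alpha_j$ form a probability distribution, Jensen's inequality gives $f\!\left(\frac{\sum_k \alpha_k x_k}{\sum_k \alpha_k}\right) \leq \frac{\sum_k \alpha_k f(x_k)}{\sum_k \alpha_k}$, and taking expectations produces the claimed bound. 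The constant-step-size statement then follows by substituting $\alpha_k = \|x_0-x^*\|/(L\sqrt{T+1})$ and simplifying.

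I expect the only real subtlety to be the conditioning and independence bookkeeping in the second paragraph: one must be careful that $g(x_k;\xi_k)$ is an unbiased subgradient estimator of $f$ at $x_k$ only \emph{after} conditioning on $x_k$, which is legitimate precisely because $x_k$ is a deterministic function of the earlier samples $\xi_0,\dots,\xi_{k-1}$ while the $\xi_k$ are i.i.d. Everything else is routine algebra together with a single use of convexity, so no further obstacles are anticipated.
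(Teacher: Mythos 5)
Your proposal is correct and takes essentially the same route as the paper: your one-step nonexpansiveness-plus-expectation bound is exactly the paper's Lemma~\ref{lem:base-stochsatic-subgradient-inequality}, and the telescoping sum followed by Jensen's inequality is how the paper proves its extended stochastic rate (Theorem~\ref{thm:extended-stochastic-subgradient-rate}), which recovers this classic statement upon setting $L_1=0$ and $L_0=L$. The conditioning bookkeeping you flag as the main subtlety is the same argument the paper leaves implicit in taking expectations inside Lemma~\ref{lem:base-stochsatic-subgradient-inequality}.
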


We say $f$ is $\mu$-strongly convex on $Q$ for some $\mu> 0$ if for every $x\in Q$ and $g\in \partial f(x)$,
$$ f(y) \geq f(x) + g^T(y-x) + \frac{\mu}{2}\|y-x\|^2 \hskip0.5cm \left(\forall y\in Q\right).$$
Under this condition, the convergence of \eqref{eq:Stochastic-Subgradient-Method} can be improved to $O(1/T)$ \cite{HazanKale2014,Lacoste-Julien2012,Rakhlin2012}. Below, we present one such bound from
~\cite{Lacoste-Julien2012}.
\begin{theorem}[Classic Strongly Convex Stochastic Rate]\label{thm:classic-strong-stochastic-subgradient-rate}
	Consider any $\mu$-strongly convex function $f$ and stochastic subgradient oracle satisfying $\EE_{\xi} \|g(x;\xi)\|^2 \leq L^2$ for all $x\in Q$. Then for the decreasing sequence of step sizes $\alpha_k = 2/\mu(k+2)$, the iteration \eqref{eq:Stochastic-Subgradient-Method} satisfies
	\begin{equation*}
	\EE_{\xi_{0 \dots T}}\left[f\left(\frac{2}{(T+1)(T+2)}\sum_{k=0}^{T}(k+1)x_k\right)-f^*\right]  \leq \frac{2L^2}{\mu(T+2)}.
	\end{equation*}
\end{theorem}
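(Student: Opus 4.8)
The plan is to follow the classic potential-function argument, tracking the expected squared distance $a_k := \EE\|x_k - x^*\|^2$ to a fixed minimizer $x^*\in X^*$, but with a carefully chosen set of averaging weights that exploits strong convexity to telescope an otherwise troublesome variable coefficient. First I would derive the basic one-step recursion. Since $x^*\in Q$ and $P_Q$ is nonexpansive, the step \eqref{eq:Stochastic-Subgradient-Method} gives
$$\|x_{k+1}-x^*\|^2 \leq \|x_k - x^*\|^2 - 2\alpha_k\, g(x_k;\xi_k)^T(x_k-x^*) + \alpha_k^2\|g(x_k;\xi_k)\|^2.$$
Conditioning on $x_k$ and taking expectation over $\xi_k$, unbiasedness replaces $g(x_k;\xi_k)$ by some $g_k\in\partial f(x_k)$ in the linear term, while $\EE_\xi\|g(x;\xi)\|^2\leq L^2$ controls the last term. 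Strong convexity applied at $x_k$ against $x^*$ yields $g_k^T(x_k-x^*)\geq f(x_k)-f^* + \tfrac{\mu}{2}\|x_k-x^*\|^2$, and substituting gives, after taking full expectation and writing $\delta_k := \EE[f(x_k)-f^*]$,
$$2\alpha_k\,\delta_k \leq (1-\mu\alpha_k)\,a_k - a_{k+1} + \alpha_k^2 L^2.$$

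The hard part is handling the factor $(1-\mu\alpha_k)$, which prevents the $a_k$ terms from telescoping directly as they would in the non-strongly-convex case of Theorem~\ref{thm:classic-stochastic-subgradient-rate}. The resolution is the prescribed step size $\alpha_k = 2/\mu(k+2)$ together with the weights $(k+1)$. Dividing by $2\alpha_k$ and substituting the step size turns the coefficient of $a_k$ into $\mu k/4$ and the coefficient of $a_{k+1}$ into $\mu(k+2)/4$; after multiplying the $k$-th inequality by $(k+1)$, these become $\tfrac{\mu}{4}k(k+1)$ and $\tfrac{\mu}{4}(k+1)(k+2)$ respectively. Writing $S_k := \tfrac{\mu}{4}k(k+1)a_k$, the weighted inequality reads
$$(k+1)\delta_k \leq S_k - S_{k+1} + \frac{(k+1)L^2}{\mu(k+2)}.$$
This is the crux: the weights are reverse-engineered precisely so that the strong-convexity contraction factor aligns the $a_{k+1}$ term of step $k$ with the $a_k$ term of step $k+1$, making the potential telescope.

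Summing over $k=0,\dots,T$ collapses the potential terms to $S_0 - S_{T+1}\leq 0$, since $S_0=0$ and $S_{T+1}\geq 0$. What remains is $\sum_{k=0}^T(k+1)\delta_k \leq \tfrac{L^2}{\mu}\sum_{k=0}^T\tfrac{k+1}{k+2}\leq \tfrac{L^2}{\mu}(T+1)$, using $\tfrac{k+1}{k+2}\leq 1$. Finally I would divide by the total weight $\sum_{k=0}^T(k+1)=\tfrac{(T+1)(T+2)}{2}$ and invoke Jensen's inequality on the convex function $f$: the weighted average $\bar x = \tfrac{2}{(T+1)(T+2)}\sum_{k=0}^T(k+1)x_k$ satisfies
$$\EE[f(\bar x)-f^*] \leq \frac{\sum_{k=0}^T(k+1)\delta_k}{\sum_{k=0}^T(k+1)} \leq \frac{L^2(T+1)/\mu}{(T+1)(T+2)/2} = \frac{2L^2}{\mu(T+2)},$$
which is the claimed bound. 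The only genuinely nontrivial step is the choice of weights in the second paragraph; everything else is the standard expand-project-condition pipeline followed by a routine Jensen averaging.
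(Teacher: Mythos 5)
Your proof is correct, and it is essentially the same argument the paper uses: your one-step recursion is Lemma~\ref{lem:base-stochsatic-subgradient-inequality} combined with strong convexity, and your weighted potential $S_k = \tfrac{\mu}{4}k(k+1)a_k$ is exactly the telescoping obtained in the paper's proof of Theorem~\ref{thm:extended-strong-stochastic-subgradient-rate} (multiplying by $(k+1)/\alpha_k$ and using the recurrence \eqref{eq:first-stong-convexity}) specialized to $L_1=0$, $L_0=L$. The final Jensen step and the bound $\sum_{k=0}^T \tfrac{k+1}{k+2} \le T+1$ likewise match the paper's handling of $\sum (k+1)\alpha_k \le 2(T+1)/\mu$.
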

We remark that Lipschitz continuity and strong convexity are fundamentally at odds. Lipschitz continuity allows at most linear growth while strong convexity requires quadratic growth. The only way both can occur is when $Q$ is bounded.

The standard analysis assumes that $\EE_{\xi}\|g(x;\xi)\|^2$ is uniformly bounded by some $L^2>0$. We generalize this by allowing the expectation to be larger when the objective gap at $x$ is large as well. In particular, we assume a bound of the form
\begin{equation}\label{eq:stochastic-generalization}
	\EE_{\xi}\|g(x;\xi)\|^2 \leq L^2_0 + L_1(f(x)-f^*)
\end{equation}
for some constants $L_0,L_1\geq 0$.
When $L_1$ equals zero, this is exactly the classic model. When $L_1$ is positive, this model allows functions with up to quadratic growth. (To see this, suppose the subgradient oracle is deterministic. Then \eqref{eq:stochastic-generalization} corresponds to a differential inequality of the form $f'(x) \leq \sqrt{L_1 (f(x)-f^*) + L_0^2}$, which has a simple quadratic solution. This interpretation is formalized in Section~\ref{subsec:quadratic-growth-interpretation}.)

The additional generality allowed by \eqref{eq:stochastic-generalization} is important for two reasons. First, it allows us to consider many classic problems which fundamentally have quadratic growth (for example, any quadratically regularized problem, like training a support vector machine, which is considered in Section~\ref{subsec:quadratic-regularization}). Secondly, this model allows us to avoid the inherent conflict in Theorem~\ref{thm:classic-strong-stochastic-subgradient-rate} between Lipschitz continuity and strong convexity since a function can globally satisfy both \eqref{eq:stochastic-generalization} and strong convexity.

Based on this generalization of Lipschitz continuity, we have the following guarantees for convex and strongly convex problems.
\begin{theorem}[Extended Stochastic Rate]\label{thm:extended-stochastic-subgradient-rate}
	Consider any convex function $f$ and stochastic subgradient oracle satisfying \eqref{eq:stochastic-generalization}. Fix some $x^*\in X^*$. Then for any positive sequence $\alpha_k$ with $L_1\alpha_k < 2$, the iteration \eqref{eq:Stochastic-Subgradient-Method} satisfies
	\begin{equation*}
	\EE_{\xi_{0 \dots T}}\left[f\left(\frac{\sum_{k=0}^{T}\alpha_k(2-L_1\alpha_k)x_k}{\sum_{k=0}^{T}\alpha_k(2-L_1\alpha_k)}\right)-f^*\right] \leq \frac{\|x_0-x^*\|^2 + L_0^2\sum_{k=0}^{T} \alpha_k^2}{\sum_{k=0}^{T} \alpha_k(2-L_1\alpha_k)}.
	\end{equation*}	
	For example, under the constant step size $\alpha_k = \|x_0-x^*\|/L_0\sqrt{T+1}$, the iteration \eqref{eq:Stochastic-Subgradient-Method} satisfies
	\begin{equation*}
	\EE_{\xi_{0 \dots T}}\left[f\left(\frac{1}{T+1}\sum_{k=0}^{T}x_k\right)-f^*\right]  \leq \frac{L_0\|x_0-x^*\|}{\sqrt{T+1}} \cdot \frac{2}{2-L_1\alpha_k},
	\end{equation*}
	provided $T$ is large enough to have $L_1\alpha_k < 2$.
\end{theorem}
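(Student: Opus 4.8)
The plan is to adapt the classic one-step ``distance contraction'' analysis underlying Theorem~\ref{thm:classic-stochastic-subgradient-rate}, the only new ingredient being that the generalized bound \eqref{eq:stochastic-generalization} contributes a term proportional to the objective gap rather than a constant; this extra term must be absorbed into the progress estimate instead of bounded away. Since stochastic subgradients cannot be normalized, Shor's hyperplane viewpoint is unavailable, so the entire argument instead tracks the quantity $\EE\,\norm{x_k-x^*}^2$.

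First I would invoke nonexpansiveness of $P_Q$ (together with $x^*\in Q$) and expand the square to obtain, for each $k$,
\[
\norm{x_{k+1}-x^*}^2 \leq \norm{x_k-x^*}^2 - 2\alpha_k\, g(x_k;\xi_k)^T(x_k-x^*) + \alpha_k^2\norm{g(x_k;\xi_k)}^2.
\]
Next I would take the conditional expectation $\EE_{\xi_k}$ given the history $\xi_0,\dots,\xi_{k-1}$ (equivalently, given $x_k$). Unbiasedness gives $\EE_{\xi_k}g(x_k;\xi_k)\in\partial f(x_k)$, so convexity yields $\EE_{\xi_k}\,g(x_k;\xi_k)^T(x_k-x^*)\geq f(x_k)-f^*$, while \eqref{eq:stochastic-generalization} bounds $\EE_{\xi_k}\norm{g(x_k;\xi_k)}^2\leq L_0^2+L_1(f(x_k)-f^*)$. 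Substituting and collecting the two terms linear in the gap produces the key one-step inequality
\[
\EE_{\xi_k}\norm{x_{k+1}-x^*}^2 \leq \norm{x_k-x^*}^2 - \alpha_k(2-L_1\alpha_k)(f(x_k)-f^*) + L_0^2\alpha_k^2.
\]

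The crux is precisely this last combination: the growth contribution $L_1\alpha_k^2(f(x_k)-f^*)$ from the second-moment bound partially cancels the descent term $-2\alpha_k(f(x_k)-f^*)$, leaving the net coefficient $\alpha_k(2-L_1\alpha_k)$. The hypothesis $L_1\alpha_k<2$ is exactly what keeps this coefficient positive, so that the gap term still contributes negative progress and can later serve as a valid weight. I would then take total expectations, rearrange, and telescope over $k=0,\dots,T$, discarding the nonnegative final term $\EE\norm{x_{T+1}-x^*}^2$, to reach
\[
\sum_{k=0}^T \alpha_k(2-L_1\alpha_k)\,\EE(f(x_k)-f^*) \leq \norm{x_0-x^*}^2 + L_0^2\sum_{k=0}^T\alpha_k^2.
\]
Finally, since the weights $\alpha_k(2-L_1\alpha_k)$ are positive, convexity of $f$ (Jensen's inequality) lets me pull the weighted average of the $x_k$ inside $f$, and dividing by $\sum_k\alpha_k(2-L_1\alpha_k)$ yields the stated bound.

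The constant step size example is then a direct substitution: with $\alpha_k=\norm{x_0-x^*}/(L_0\sqrt{T+1})$ one has $L_0^2\sum_k\alpha_k^2=\norm{x_0-x^*}^2$, making the numerator $2\norm{x_0-x^*}^2$; since all weights are equal, the averaging point collapses to the plain average $\tfrac{1}{T+1}\sum_k x_k$, and simplifying the ratio gives the claimed $\tfrac{L_0\norm{x_0-x^*}}{\sqrt{T+1}}\cdot\tfrac{2}{2-L_1\alpha_k}$. I expect no real obstacle beyond bookkeeping; the single genuinely new idea is the absorption step that forces the requirement $L_1\alpha_k<2$, while the telescoping and Jensen steps mirror the classical proof verbatim.
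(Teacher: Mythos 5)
Your proposal is correct and follows essentially the same route as the paper: the one-step inequality from nonexpansiveness of $P_Q$, the absorption of the $L_1\alpha_k^2(f(x_k)-f^*)$ term into the descent term to get the coefficient $\alpha_k(2-L_1\alpha_k)$, telescoping, and Jensen's inequality with those positive weights. The only (cosmetic) difference is that you handle the randomness via explicit conditional expectations given the history, whereas the paper packages the expectation step into its Lemma~\ref{lem:base-stochsatic-subgradient-inequality}; the substance is identical.
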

\begin{theorem}[Extended Strongly Convex Stochastic Rate\footnote{A predecessor of Theorem~\ref{thm:extended-strong-stochastic-subgradient-rate} was given by Davis and Grimmer in Proposition~3.2 of~\cite{DavisGrimmer2017}, where a $O(\log(T)/T)$ convergence rate was shown for certain non-Lipschitz strongly convex problems.}]\label{thm:extended-strong-stochastic-subgradient-rate}
	Consider any $\mu$-strongly convex function $f$ and stochastic subgradient oracle satisfying \eqref{eq:stochastic-generalization}. Fix some $x^*\in X^*$. Then for the decreasing sequence of step sizes $$\alpha_k = \frac{2}{\mu(k+2) + \frac{L_1^2}{\mu(k+1)}},$$ the iteration \eqref{eq:Stochastic-Subgradient-Method} satisfies
	\begin{equation*}
	\EE_{\xi_{0 \dots T}}\left[f\left(\frac{\sum_{k=0}^{T}(k+1)(2-L_1\alpha_k)x_k}{\sum_{k=0}^{T}(k+1)(2-L_1\alpha_k)}\right)-f^*\right]  \leq \frac{2L_0^2(T+1) + L_1^2\|x_0-x^*\|^2/2}{\mu\sum_{k=0}^{T}(k+1)(2-L_1\alpha_k)}.
	\end{equation*}
	The following simpler averaging gives a bound weakened roughly by a factor of two:
	\begin{equation*}
	\EE_{\xi_{0 \dots T}}\left[f\left(\frac{2}{(T+1)(T+2)}\sum_{k=0}^{T}(k+1)x_k\right)-f^*\right]  \leq \frac{4L_0^2}{\mu(T+2)} + \frac{L_1^2\|x_0-x^*\|^2}{\mu(T+1)(T+2)}.
	\end{equation*}
\end{theorem}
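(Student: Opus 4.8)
The plan is to follow the standard energy-based analysis of the stochastic subgradient method, weighting the per-iteration inequalities so that the announced step sizes force an exact telescoping. First I would use the non-expansiveness of $P_Q$ and expand the iteration \eqref{eq:Stochastic-Subgradient-Method} to obtain
\begin{equation*}
\|x_{k+1}-x^*\|^2 \leq \|x_k-x^*\|^2 - 2\alpha_k g(x_k;\xi_k)^T(x_k-x^*) + \alpha_k^2\|g(x_k;\xi_k)\|^2.
\end{equation*}
Taking expectation over $\xi_k$ conditioned on $x_k$, writing $\bar g_k := \EE_{\xi_k} g(x_k;\xi_k)\in\partial f(x_k)$, and applying the strong convexity bound $\bar g_k^T(x_k-x^*)\geq f(x_k)-f^*+\tfrac{\mu}{2}\|x_k-x^*\|^2$ together with the growth condition \eqref{eq:stochastic-generalization}, I expect (after taking full expectation, with $R_k:=\EE\|x_k-x^*\|^2$ and $F_k:=\EE[f(x_k)-f^*]$) the one-step inequality
\begin{equation*}
\alpha_k(2-L_1\alpha_k)F_k \leq (1-\mu\alpha_k)R_k - R_{k+1} + \alpha_k^2 L_0^2.
\end{equation*}
The factor $2-L_1\alpha_k$ arises exactly because the $\alpha_k^2 L_1(f(x_k)-f^*)$ piece of \eqref{eq:stochastic-generalization} merges with the $-2\alpha_k(f(x_k)-f^*)$ piece coming from strong convexity.

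The crux, and the step I expect to be the main obstacle, is choosing weights that telescope the distance terms. I would set $c_k := (k+1)/\alpha_k$ and multiply the $k$-th inequality by $c_k$, so its left-hand side becomes $(k+1)(2-L_1\alpha_k)F_k$. For the right-hand sides to collapse upon summation I need the coefficient of $R_k$, namely $c_k(1-\mu\alpha_k)$, to match the coefficient $c_{k-1}$ inherited from iteration $k-1$. The announced step size is engineered for exactly this: substituting $\alpha_k = 2/[\mu(k+2)+L_1^2/(\mu(k+1))]$ gives $c_k = \tfrac{\mu(k+1)(k+2)}{2}+\tfrac{L_1^2}{2\mu}$, and a direct computation yields the identity
\begin{equation*}
c_k(1-\mu\alpha_k) = \frac{\mu k(k+1)}{2}+\frac{L_1^2}{2\mu} = c_{k-1},
\end{equation*}
with the convention $c_{-1}=L_1^2/(2\mu)$. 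Checking this identity cleanly is the heart of the argument; once it holds, everything else is bookkeeping.

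Summing the weighted inequalities over $k=0,\dots,T$, the distance terms telescope to $c_{-1}R_0 - c_T R_{T+1}\leq \tfrac{L_1^2}{2\mu}\|x_0-x^*\|^2$ after discarding $R_{T+1}\geq 0$, while the noise terms obey $\sum_{k=0}^T(k+1)\alpha_k L_0^2 \leq \tfrac{2L_0^2(T+1)}{\mu}$ since $(k+1)\alpha_k\leq 2/\mu$. This yields $\sum_{k=0}^T (k+1)(2-L_1\alpha_k)F_k \leq 2L_0^2(T+1)/\mu + L_1^2\|x_0-x^*\|^2/(2\mu)$. Dividing by $\sum_{k=0}^T (k+1)(2-L_1\alpha_k)$ and applying Jensen's inequality to the convex $f$ with these same weights gives the first claimed bound.

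For the simpler averaging, I would replace the weights $(k+1)(2-L_1\alpha_k)$ by the cleaner weights $(k+1)$. The point is that AM--GM on the denominator of $\alpha_k$ gives $\mu(k+2)+L_1^2/(\mu(k+1))\geq 2L_1$, so $L_1\alpha_k\leq 1$ and hence $2-L_1\alpha_k\geq 1$. Since $F_k\geq 0$, this gives $(k+1)F_k \leq (k+1)(2-L_1\alpha_k)F_k$, so the summed bound above also controls $\sum_{k=0}^T (k+1)F_k$. Dividing instead by $\sum_{k=0}^T (k+1)=(T+1)(T+2)/2$ and applying Jensen once more produces the stated $O(L_0^2/(\mu T)) + O(L_1^2\|x_0-x^*\|^2/(\mu T^2))$ bound.
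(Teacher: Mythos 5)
Your proposal is correct and follows essentially the same argument as the paper: the same one-step inequality combining strong convexity with the growth condition \eqref{eq:stochastic-generalization}, the same weighting by $(k+1)/\alpha_k$ to force telescoping (your explicit identity $c_k(1-\mu\alpha_k)=c_{k-1}$ is exactly the paper's recurrence \eqref{eq:first-stong-convexity}), the same bounds $(k+1)\alpha_k\le 2/\mu$ and $L_1\alpha_k\le 1$, and the same two Jensen steps. The only cosmetic differences are that you work with $R_k=\EE\|x_k-x^*\|^2$ directly (slightly cleaner than the paper's $D_k$) and verify the telescoping by computing $c_k$ in closed form rather than stating the recurrence.
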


\subsection{Related Works}
Recently, Renegar~\cite{Renegar2016} introduced a novel framework that allows first-order methods to be applied to general (non-Lipschitz) convex optimization problems via a radial transformation. Based on this framework, Grimmer~\cite{Grimmer2017-radial-subgradient} showed a simple radial subgradient method has convergence paralleling the classic $O(1/\sqrt{T})$ rate without assuming Lipschitz continuity. This algorithm is applied to a transformed version of the original problem and replaces orthogonal projection by a line search at each iteration.

Lu~\cite{Lu2017} analyzes an interesting subgradient-type method (which is a variation of mirror descent) for non-Lipschitz problems that is customized for a particular problem via a reference function. This approach gives convergence guarantees for both deterministic and stochastic problems based on a relative-continuity constant instead of a uniform Lipschitz constant.

Although the works of Renegar~\cite{Renegar2016}, Grimmer~\cite{Grimmer2017-radial-subgradient}, and Lu~\cite{Lu2017} give convergence rates for specialized subgradient methods without assuming Lipschitz continuity, objective gap guarantees for the classic subgradient methods \eqref{eq:Deterministic-Subgradient-Method} and \eqref{eq:Stochastic-Subgradient-Method}, such as the ones in the present paper, have been missing prior to our work.

	\section{Applications of Our Extended Convergence Bounds} \label{sec:examples}
In this section, we apply our convergence bounds to a variety of problems outside the scope of the traditional theory based on uniform Lipschitz constants.

\subsection{Smooth Optimization}
The standard analysis of gradient descent in smooth optimization assumes the gradient of the objective function is uniformly Lipschitz continuous, or more generally, uniformly H\"older continuous. A differentiable function $f$ has $(L,v)$-H\"older continuous gradient on $\RR^d$ for some $L>0$ and $v\in(0,1]$ if for all $x,y\in\RR^d$
$$ \|\nabla f(y) -\nabla f(x)\| \leq L\|y-x\|^v.$$
Note this is exactly Lipschitz continuity of the gradient when $v=1$.
Below, we state a simple bound on the growth $\mathcal{D}(t)$ of any such function.
\begin{lemma} \label{lem:Lipschitz-Gradient-To-Steepness}
	Consider any $f\in C^1$ with a $(L,v)$-H\"older continuous gradient on $\RR^d$ and any minimizer $x^*\in X^*$. Then
	$$ f(x) - f(x^*) \leq \frac{L}{v+1}\|x-x^*\|^{v+1} \hskip0.5cm (\forall x\in \RR^d).$$
\end{lemma}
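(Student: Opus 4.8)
The plan is to integrate the gradient along the line segment connecting the minimizer $x^*$ to an arbitrary point $x$, and then to exploit the first-order optimality condition $\nabla f(x^*) = 0$ together with the H\"older bound on how fast the gradient can change. Since $f \in C^1$, the fundamental theorem of calculus applied to the scalar function $t \mapsto f(x^* + t(x-x^*))$ on $[0,1]$ gives
\begin{equation*}
f(x) - f(x^*) = \int_0^1 \nabla f\bigl(x^* + t(x-x^*)\bigr)^T (x-x^*)\, dt.
\end{equation*}
Because $x^*$ minimizes $f$ over all of $\RR^d$, we have $\nabla f(x^*) = 0$, so I can freely subtract $\nabla f(x^*)^T(x-x^*)$ inside the integral without changing its value, rewriting the integrand as $\bigl[\nabla f(x^* + t(x-x^*)) - \nabla f(x^*)\bigr]^T(x-x^*)$.

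Next I would bound the integrand using Cauchy--Schwarz, giving the factor $\|\nabla f(x^* + t(x-x^*)) - \nabla f(x^*)\| \cdot \|x-x^*\|$, and then apply the $(L,v)$-H\"older continuity hypothesis to the difference of gradients. Since the two argument points differ by $t(x-x^*)$, this yields the bound $L\,\|t(x-x^*)\|^v = L\,t^v\,\|x-x^*\|^v$ on the gradient difference. Pulling the $t$-independent norm factors out of the integral collapses everything to $L\,\|x-x^*\|^{v+1}\int_0^1 t^v\, dt$, and evaluating $\int_0^1 t^v\, dt = 1/(v+1)$ for $v \in (0,1]$ produces exactly the claimed constant $L/(v+1)$.

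There is no substantive obstacle here: the argument is a routine combination of the fundamental theorem of calculus, Cauchy--Schwarz, and the H\"older hypothesis. The only two points requiring any attention are the use of the global optimality condition $\nabla f(x^*) = 0$ to introduce the gradient difference (which is what makes the H\"older estimate applicable and gives the $t^v$ scaling rather than a constant), and the convergence of the elementary integral $\int_0^1 t^v\, dt$, which is immediate for the stated range of $v$. This bound is precisely what Corollary~\ref{cor:generalized-gradient-descent} invokes in the case $v=1$, where $\mathcal{D}(t) = Lt^2/2$.
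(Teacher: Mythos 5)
Your proposal is correct and follows essentially the same route as the paper's own proof: both apply the fundamental theorem of calculus along the segment from $x^*$ to $x$, use $\nabla f(x^*)=0$ to introduce the gradient difference, bound it via Cauchy--Schwarz and the $(L,v)$-H\"older hypothesis, and evaluate $\int_0^1 t^v\,dt = 1/(v+1)$. No gaps; the paper merely compresses the Cauchy--Schwarz and H\"older steps into a single displayed inequality.
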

\begin{proof}
	Since $\nabla f(x^*)=0$, the bound follows directly as
	\begin{align*}
	f(x) & = f(x^*) + \int_{0}^{1}\nabla f(x^*+t(x-x^*))^T(x-x^*)\ dt\\
	& \leq f(x^*) + \nabla f(x^*)^T(x-x^*) + \int_{0}^{1} Lt^v\|x-x^*\|^{v+1}\ dt\\
	& = f(x^*) + \frac{L}{v+1}\|x-x^*\|^{v+1}.\qedhere
	\end{align*}
\end{proof}
This lemma with $v=1$ implies any function with an $L$-Lipschitz gradient has growth bounded by $\mathcal{D}(t)=Lt^2/2$. Then Theorem~\ref{thm:extended-deterministic-subgradient-rate} gives our generalization of the classic gradient descent convergence rate claimed in Corollary~\ref{cor:generalized-gradient-descent}. Further, for any function with a H\"olderian gradient, we find the following $O(1/T^{(v+1)/2})$ convergence rate.
\begin{corollary}[Generalizing H\"olderian Gradient Descent's Convergence] \label{cor:generalized-holderian-gradient-descent}
	Consider any convex function $f$ satisfying \eqref{eq:deterministic-generalization} with $\mathcal{D}(t) = Lt^{v+1}/(v+1)$. Then under the constant step size $\alpha_k = \|x_0-x^*\|/\sqrt{T+1}$, the iteration \eqref{eq:Deterministic-Subgradient-Method} satisfies
	\begin{equation*} 
	\min_{k=0 \dots T}\left\{f(x_k)-f^*\right\} \leq \frac{L \|x_0-x^*\|^{v+1}}{(v+1)(T+1)^{(v+1)/2}}.
	\end{equation*}
\end{corollary}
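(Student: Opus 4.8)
The plan is to apply Theorem~\ref{thm:extended-deterministic-subgradient-rate} directly with the specific growth function $\mathcal{D}(t) = Lt^{v+1}/(v+1)$ given in the hypothesis. The work of establishing a genuine convergence guarantee has already been done: Theorem~\ref{thm:extended-deterministic-subgradient-rate} converts any nondecreasing growth bound of the form \eqref{eq:deterministic-generalization} into an objective gap bound, and its stated constant-step-size instance already plugs $\alpha_k = \|x_0-x^*\|/\sqrt{T+1}$ into the general expression to yield the clean argument $\|x_0-x^*\|/\sqrt{T+1}$ inside $\mathcal{D}$. So this corollary is essentially just a substitution.

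Concretely, I would first note that the stated growth function $\mathcal{D}(t) = Lt^{v+1}/(v+1)$ is nondecreasing and nonnegative on $\RR_+$ for $v\in(0,1]$ and $L>0$, so the hypotheses of Theorem~\ref{thm:extended-deterministic-subgradient-rate} are met. Then I would invoke the constant-step-size conclusion of that theorem, which gives
\begin{equation*}
\min_{k=0\dots T}\left\{f(x_k)-f^*\right\} \leq \mathcal{D}\left(\frac{\|x_0-x^*\|}{\sqrt{T+1}}\right).
\end{equation*}
Finally I would evaluate the right-hand side by substituting $t = \|x_0-x^*\|/\sqrt{T+1}$ into $\mathcal{D}$, using $t^{v+1} = \|x_0-x^*\|^{v+1}/(T+1)^{(v+1)/2}$, which produces exactly the claimed bound $L\|x_0-x^*\|^{v+1}/\big((v+1)(T+1)^{(v+1)/2}\big)$.

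There is really no substantive obstacle here, since all the analytic content lives in Theorem~\ref{thm:extended-deterministic-subgradient-rate} (and, for producing the growth bound in the first place, in Lemma~\ref{lem:Lipschitz-Gradient-To-Steepness}). The only thing to be careful about is the exponent arithmetic: raising $(T+1)^{-1/2}$ to the power $v+1$ gives $(T+1)^{-(v+1)/2}$, so the resulting rate is $O(1/T^{(v+1)/2})$, interpolating between the $O(1/\sqrt{T})$ rate at $v=0$ and the $O(1/T)$ gradient-descent rate at $v=1$ recorded in Corollary~\ref{cor:generalized-gradient-descent}. Since $\lim_{t\to 0^+}\mathcal{D}(t)/t = 0$ for every $v>0$, this is consistent with the remark that such growth yields an $o(1/\sqrt{T})$ rate.
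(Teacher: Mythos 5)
Your proposal is correct and matches the paper's own (implicit) argument exactly: the corollary is stated as an immediate consequence of the constant-step-size clause of Theorem~\ref{thm:extended-deterministic-subgradient-rate}, obtained by substituting $\mathcal{D}(t) = Lt^{v+1}/(v+1)$ and carrying out the exponent arithmetic, which is precisely what you do. No gaps; nothing further is needed.
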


\subsection{Additive Composite Optimization}
Often problems arise where the objective is to minimize a sum of smooth and nonsmooth functions. We consider the following general formulation of this problem
$$ \min_{x\in \RR^d} f(x) := \Phi(x) + h(x),$$
for any differentiable convex function $\Phi$ with $(L_\Phi, v)$-H\"olderian gradient and any $L_h$-Lipschitz continuous convex function $h$. Such problems occur when regularizing smooth optimization problems, where $h$ would be the sum of one or more nonsmooth regularizers (for example, $\|\cdot\|_1$ to induce sparsity).

Additive composite problems can be solved by prox-gradient or splitting methods, which solve a subproblem based on $h$ each iteration. However, this limits these methods to problems where $h$ is relatively simple.
The subgradient method avoids this limitation by only requiring the computation of a subgradient of $f$ each iteration, which is given by $\partial f(x) = \nabla \Phi(x) + \partial h(x)$. The classic convergence theory fails to give any guarantees for this problem since $f$ may be non-Lipschitz. In contrast, we find this problem class has a simple growth bound from which guarantees for the classic subgradient method directly follow.
\begin{lemma} \label{lem:additive-composite}
	Consider any $\Phi\in C^1$ with a $(L_\Phi,v)$-H\"older continuous gradient on $\RR^d$, $L_h$-Lipschitz continuous $h$ on $\RR^d$, and any minimizer $x^*\in X^*$. Then
	$$ f(x) - f(x^*) \leq \frac{L_\Phi}{v+1}\|x-x^*\|^{v+1} +2L_h\|x-x^*\|\hskip0.5cm (\forall x\in \RR^d).$$
\end{lemma}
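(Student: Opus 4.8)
The plan is to split the objective gap into its smooth and nonsmooth contributions, $f(x) - f(x^*) = [\Phi(x) - \Phi(x^*)] + [h(x) - h(x^*)]$, and bound each piece separately. The nonsmooth term is immediate: since $h$ is $L_h$-Lipschitz, $h(x) - h(x^*) \leq L_h\|x - x^*\|$, which already supplies one of the two copies of $L_h\|x-x^*\|$ in the claim. The smooth term is where the work lies, and I would handle it by reusing the integral computation from the proof of Lemma~\ref{lem:Lipschitz-Gradient-To-Steepness}.

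For the smooth part, I would write $\Phi(x) = \Phi(x^*) + \int_0^1 \nabla\Phi(x^* + t(x-x^*))^T(x-x^*)\,dt$ and insert $\nabla\Phi(x^*)$ to separate the first-order term, exactly as before. Applying $(L_\Phi,v)$-H\"older continuity of $\nabla\Phi$ to the integrand gives $\Phi(x) - \Phi(x^*) \leq \nabla\Phi(x^*)^T(x-x^*) + \frac{L_\Phi}{v+1}\|x-x^*\|^{v+1}$. The crucial distinction from Lemma~\ref{lem:Lipschitz-Gradient-To-Steepness} is that there $x^*$ minimized the smooth function itself, forcing $\nabla\Phi(x^*)=0$ and letting the linear term vanish; here $x^*$ minimizes the composite $f = \Phi + h$, so $\nabla\Phi(x^*)$ need not be zero and the term $\nabla\Phi(x^*)^T(x-x^*)$ survives.

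Controlling this leftover linear term is the main obstacle, and it is resolved by the first-order optimality condition at the minimizer. Since $x^* \in X^*$ we have $0 \in \partial f(x^*) = \nabla\Phi(x^*) + \partial h(x^*)$, hence $-\nabla\Phi(x^*) \in \partial h(x^*)$. Because $h$ is $L_h$-Lipschitz, every subgradient of $h$ has norm at most $L_h$, so $\|\nabla\Phi(x^*)\| \leq L_h$; Cauchy--Schwarz then yields $\nabla\Phi(x^*)^T(x-x^*) \leq L_h\|x-x^*\|$. Combining the three resulting contributions---the $L_h\|x-x^*\|$ from this linear term, the $\frac{L_\Phi}{v+1}\|x-x^*\|^{v+1}$ from the H\"olderian integral, and the $L_h\|x-x^*\|$ from $h$ itself---gives the stated bound, the two copies of $L_h\|x-x^*\|$ accounting precisely for the factor of $2$.
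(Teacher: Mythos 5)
Your proof is correct and is essentially the paper's argument in a different packaging: both hinge on the first-order optimality condition $-\nabla\Phi(x^*)\in\partial h(x^*)$, the resulting norm bound $\|\nabla\Phi(x^*)\|\leq L_h$ from Lipschitz continuity of $h$, and the H\"olderian integral estimate, with the factor of $2$ arising in exactly the same way (one $L_h$ from the leftover linear term, one from $h$ itself). The only difference is organizational: the paper routes the argument through the auxiliary minorant $l(x) = \Phi(x) + h(x^*) + g^{*T}(x-x^*)$, which is minimized at $x^*$ so that Lemma~\ref{lem:Lipschitz-Gradient-To-Steepness} applies as a black box, whereas you re-open that lemma's integral computation and bound the surviving term $\nabla\Phi(x^*)^T(x-x^*)$ directly.
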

\begin{proof}
	From the first-order optimality conditions of $f$, we know $g^*:=-\nabla \Phi(x^*)\in\partial h(x^*)$. Define the following lower bound on $f(x)$
	$$l(x) := \Phi(x) + h(x^*) + g^{*T}(x-x^*).$$
	Notice that $f(x)$ and $l(x)$ both minimize at $x^*$ with $f(x^*) = l(x^*)$. Since $l(x)$ has a $(L_\Phi,v)$-H\"older continuous gradient, Lemma~\ref{lem:Lipschitz-Gradient-To-Steepness} implies for any $x\in\RR^d$,
	$$l(x)-l(x^*)\leq \frac{L_\Phi}{v+1}\|x-x^*\|^{v+1}.$$
	The Lipschitz continuity of $h$ implies
	$$l(x) =\Phi(x) +h(x^*)+g^{*T}(x-x^*)\geq \Phi(x) +(h(x)-L_h\|x-x^*\|)-L_h\|x-x^*\|.$$
	Combining these two inequalities completes the proof.
\end{proof}
Plugging $\mathcal{D}(t) = L_\Phi t^{v+1}/(v+1) + 2L_ht$ into Theorem~\ref{thm:extended-deterministic-subgradient-rate} immediately gives the following $O(1/\sqrt{T})$ convergence rate (for simplicity, we state the bound for constant step size).
\begin{corollary}[Additive Composite Convergence]
Consider the deterministic subgradient oracle $\nabla \Phi(x) + g_h(x)$. Then under the constant step size $\alpha_k = \|x_0-x^*\|/\sqrt{T+1}$, the iteration \eqref{eq:Deterministic-Subgradient-Method} satisfies
$$\min_{k=0\dots T}\{f(x_k)-f^*\} \leq  \frac{L_{\Phi} \|x_0-x^*\|^{v+1}}{(v+1)(T+1)^{(v+1)/2}} + \frac{2L_h\|x_0-x^*\|}{\sqrt{T+1}}.$$
\end{corollary}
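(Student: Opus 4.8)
The plan is to derive this corollary as a direct specialization of Theorem~\ref{thm:extended-deterministic-subgradient-rate}, using Lemma~\ref{lem:additive-composite} to supply the required growth bound. First I would record that the proposed oracle $\nabla\Phi(x)+g_h(x)$ (with $g_h(x)\in\partial h(x)$) is a legitimate deterministic subgradient oracle for $f$: since $\Phi$ is differentiable and convex, the sum rule gives $\partial f(x)=\nabla\Phi(x)+\partial h(x)$, so this oracle returns an element of $\partial f(x)$, and the iteration \eqref{eq:Deterministic-Subgradient-Method} is well-defined.

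Next I would identify the growth function. Lemma~\ref{lem:additive-composite} establishes that $f$ satisfies the growth condition \eqref{eq:deterministic-generalization} with
$$\mathcal{D}(t) = \frac{L_\Phi}{v+1}t^{v+1} + 2L_h t.$$
Before invoking the theorem I would check the two hypotheses imposed on $\mathcal{D}$ in \eqref{eq:deterministic-generalization}: because $L_\Phi,L_h\geq 0$ and $v\in(0,1]$, each term is nonnegative and nondecreasing on $\RR_+$, so $\mathcal{D}$ is a valid nonnegative nondecreasing function and the theorem applies.

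The remaining step is pure substitution. Applying Theorem~\ref{thm:extended-deterministic-subgradient-rate} under the constant step size $\alpha_k=\|x_0-x^*\|/\sqrt{T+1}$ yields $\min_{k=0\dots T}\{f(x_k)-f^*\}\leq \mathcal{D}(\|x_0-x^*\|/\sqrt{T+1})$. Evaluating $\mathcal{D}$ at $t=\|x_0-x^*\|/\sqrt{T+1}$ and using $t^{v+1}=\|x_0-x^*\|^{v+1}/(T+1)^{(v+1)/2}$ separates the bound into exactly the two claimed terms, the first coming from the H\"olderian part $\Phi$ and the second from the Lipschitz part $h$.

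I do not expect a genuine obstacle here, since all the analytic content has already been absorbed into Lemma~\ref{lem:additive-composite} (the growth estimate) and Theorem~\ref{thm:extended-deterministic-subgradient-rate} (the conversion of the hyperplane-distance guarantee into an objective-gap bound). The only points demanding any care are the sum-rule justification for the oracle and the verification that $\mathcal{D}$ meets the monotonicity and nonnegativity hypotheses; both are immediate from convexity of $\Phi$ and the signs of the constants.
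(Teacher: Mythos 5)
Your proposal is correct and matches the paper's own derivation exactly: the paper also obtains the growth function $\mathcal{D}(t) = L_\Phi t^{v+1}/(v+1) + 2L_h t$ from Lemma~\ref{lem:additive-composite} and plugs it into Theorem~\ref{thm:extended-deterministic-subgradient-rate} with the constant step size. Your added checks (the subgradient sum rule for the oracle and the monotonicity/nonnegativity of $\mathcal{D}$) are sound but routine details the paper leaves implicit.
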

The first term in this rate exactly matches the convergence rate on functions with H\"olderian gradient like $\Phi$ (see Corollary~\ref{cor:generalized-holderian-gradient-descent}). Further, up to a factor of two, the second term matches the convergence rate on Lipschitz continuous functions like $h$ (see \eqref{eq:classic-subgradient-rate}). Thus the subgradient method on $\Phi(x) + h(x)$ has convergence guarantees no worse than those of the subgradient method on $\Phi(x)$ or $h(x)$ separately.

\subsection{Quadratically Regularized, Stochastic Optimization}\label{subsec:quadratic-regularization}
Another common class of optimization problems result from adding a quadratic regularization term $(\lambda/2)\|x\|^2$ to the objective function, for some parameter $\lambda>0$. Consider solving
$$\min_{x\in \RR^d} f(x) := h(x)+\frac{\lambda}{2}\|x\|^2$$
for any Lipschitz continuous convex function $h$.	
Suppose we have a stochastic subgradient oracle for $h$ denoted by $\EE_{\xi}\ g_h(x;\xi) \in \partial h(x)$ satisfying $\EE_{\xi} \|g_h(x;\xi)\|^2\leq L^2$. Although the function $h$ and its stochastic oracle meet the necessary conditions for the classic theory to be applied, the addition of a quadratic term violates uniform Lipschitz continuity. Simple arguments give a subgradient norm bound like \eqref{eq:stochastic-generalization} and the following $O(1/T)$ convergence rate. 
\begin{corollary}[Quadratically Regularized Convergence]\label{cor:quadratic-regularization-rate}
	Consider the decreasing step sizes
	$$\alpha_k = \frac{2}{\lambda(k+2) + \frac{36\lambda}{k+1}}$$ and stochastic subgradient oracle $g_h(x ;\xi) + \lambda x$. Fix some $x^*\in X^*$. The iteration \eqref{eq:Stochastic-Subgradient-Method} satisfies
	\begin{equation*}
	\EE_{\xi_{0 \dots T}}\left[f\left(\frac{2}{(T+1)(T+2)}\sum_{k=0}^{T}(k+1)x_k\right)-f^*\right]  \leq \frac{24L^2}{\lambda(T+2)} + \frac{36\lambda \|x_0-x^*\|^2}{(T+1)(T+2)}.
	\end{equation*}
\end{corollary}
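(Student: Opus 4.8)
The plan is to verify that $f$ and the oracle $g(x;\xi) = g_h(x;\xi) + \lambda x$ satisfy the two hypotheses of Theorem~\ref{thm:extended-strong-stochastic-subgradient-rate} --- namely $\mu$-strong convexity with $\mu = \lambda$ and the second-moment growth bound \eqref{eq:stochastic-generalization} with the specific constants $L_0^2 = 6L^2$ and $L_1 = 6\lambda$ --- and then read off the conclusion. Strong convexity is immediate: $h$ is convex and $\frac{\lambda}{2}\|x\|^2$ is $\lambda$-strongly convex, so $f$ is $\lambda$-strongly convex on $\RR^d$. Everything else hinges on establishing \eqref{eq:stochastic-generalization} with exactly these constants, because the prescribed step sizes $\alpha_k = 2/(\lambda(k+2) + 36\lambda/(k+1))$ are precisely those of Theorem~\ref{thm:extended-strong-stochastic-subgradient-rate} once $\mu = \lambda$ and $L_1^2/\mu = 36\lambda^2/\lambda = 36\lambda$, and substituting $L_0^2 = 6L^2$, $L_1 = 6\lambda$ into its simpler-averaging bound produces exactly $24L^2/(\lambda(T+2)) + 36\lambda\|x_0-x^*\|^2/((T+1)(T+2))$.

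So the real work is the moment bound. First I would record two consequences of the hypotheses. Since $\EE_\xi\|g_h(x;\xi)\|^2 \le L^2$, Jensen's inequality gives a deterministic subgradient $\EE_\xi g_h(x;\xi) \in \partial h(x)$ of norm at most $L$ at every $x$, so $h$ is $L$-Lipschitz. Moreover, first-order optimality $0 \in \partial f(x^*) = \partial h(x^*) + \lambda x^*$ shows $-\lambda x^* \in \partial h(x^*)$, whence $\lambda\|x^*\| \le L$, i.e.\ $\|x^*\| \le L/\lambda$. Next, using the triangle inequality together with $\EE_\xi\|g_h(x;\xi)\| \le (\EE_\xi\|g_h(x;\xi)\|^2)^{1/2} \le L$, I would bound $\EE_\xi\|g_h(x;\xi) + \lambda x\|^2 \le (L + \lambda\|x\|)^2$.

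The delicate step --- and the one where the constants are decided --- is converting $(L+\lambda\|x\|)^2$ into an affine function of $f(x)-f^*$. Using $\|x\| \le \|x-x^*\| + \|x^*\| \le \|x-x^*\| + L/\lambda$ gives $L + \lambda\|x\| \le 2L + \lambda\|x-x^*\|$, and expanding the square leaves a cross term $4L\lambda\|x-x^*\|$ that I would split with Young's inequality $2ab \le \epsilon a^2 + \epsilon^{-1}b^2$. Taking $\epsilon = \tfrac{1}{2}$ with $a = 2L$ and $b = \lambda\|x-x^*\|$ yields $(2L + \lambda\|x-x^*\|)^2 \le 6L^2 + 3\lambda^2\|x-x^*\|^2$, and the strong-convexity lower bound $\frac{\lambda}{2}\|x-x^*\|^2 \le f(x) - f^*$ turns $3\lambda^2\|x-x^*\|^2$ into $6\lambda(f(x)-f^*)$. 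This is exactly \eqref{eq:stochastic-generalization} with $L_0^2 = 6L^2$ and $L_1 = 6\lambda$.

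The main obstacle is thus purely the bookkeeping of constants: any cruder splitting (for instance $\|a+b\|^2 \le 2\|a\|^2 + 2\|b\|^2$ applied at the outset) inflates $L_0$ or $L_1$ and destroys the clean match with the stated step sizes and final bound; it is the specific choice $\epsilon = \tfrac{1}{2}$, combined with the optimality estimate $\|x^*\| \le L/\lambda$, that makes the two factors of $6$ appear. With both hypotheses verified, the corollary follows by direct substitution into Theorem~\ref{thm:extended-strong-stochastic-subgradient-rate}.
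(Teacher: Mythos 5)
Your proposal is correct and takes essentially the same route as the paper: both proofs reduce the corollary to Theorem~\ref{thm:extended-strong-stochastic-subgradient-rate} by verifying $\lambda$-strong convexity and the bound \eqref{eq:stochastic-generalization} with exactly $L_0^2 = 6L^2$, $L_1 = 6\lambda$, and both hinge on the same two ingredients — the optimality condition $-\lambda x^* \in \partial h(x^*)$ (giving $\lambda\|x^*\| \leq L$) and the quadratic growth $\frac{\lambda}{2}\|x-x^*\|^2 \leq f(x)-f^*$. The only difference is bookkeeping: the paper obtains the constants from the three-term splitting $\EE_\xi\|g_h(x;\xi) - g^* + (g^*+\lambda x)\|^2 \leq 3\EE_\xi\|g_h(x;\xi)\|^2 + 3\|g^*\|^2 + 3\|g^*+\lambda x\|^2$ built around the quadratic minorant $l(x) = h(x^*)+g^{*T}(x-x^*)+\frac{\lambda}{2}\|x\|^2$, whereas you use the scalar chain $(L+\lambda\|x\|)^2 \leq (2L+\lambda\|x-x^*\|)^2$ and Young's inequality with $\epsilon = \tfrac12$ — so, contrary to your closing remark, a ``crude'' norm-square splitting does reproduce the factors of $6$, provided it is the three-way splitting centered at $g^*$ rather than a two-way splitting of $g_h$ and $\lambda x$.
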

\begin{proof}
	Consider any $x^*\in X^*$ and $g^*:= -\lambda x^*\in\partial h(x^*)$ (this inclusion follows from the first-order optimality conditions for $x^*$). Define the following lower bound on $f(x)$
	$$l(x) := h(x^*) + g^{*T}(x-x^*) + \frac{\lambda}{2}\|x\|^2.$$
	Notice that $f(x)$ and $l(x)$ both minimize at $x^*$ with $f(x^*) = l(x^*)$. Since $l$ is a quadratic centered at $x^*$, we can bound the size of $\nabla l(x)=g^* +\lambda x$ for any $x\in\RR^d$ as
	$$\|\nabla l(x)\|^2 =\lambda\|x-x^*\|^2 = 2\lambda (l(x) - l(x^*)) \leq 2\lambda (f(x) - f(x^*)).$$
	Applying Jensen's inequality and the assumed subgradient bound implies
	\begin{align*}
	\EE_{\xi}\|g_h(x;\xi) + \lambda x\|^2 & = \EE_{\xi}\|g_h(x;\xi) - g^* + g^* + \lambda x\|^2\\
	& \leq 3\EE_{\xi}\|g_h(x;\xi)\|^2 + 3\|g^*\|^2 + 3\|g^*+\lambda x\|^2\\
	& \leq 6L^2 + 6\lambda(f(x)-f(x^*)).
	\end{align*}
	Noting $f$ is $\lambda$-strongly convex, our bound follows from Theorem~\ref{thm:extended-strong-stochastic-subgradient-rate}.	
\end{proof}

One common example of a problem of the form $h(x) + (\lambda/2)\|x\|^2$ is training a Support Vector Machine (SVM). Suppose one has $n$ data points each with a feature vector $w_i\in\RR^d$ and label $y_i\in\{-1,1\}$. Then one trains a model $x\in \RR^d$ for some parameter $\lambda>0$ by solving
$$ \min_{x\in \RR^d} f(x) := \frac{1}{n}\sum_{i=1}^{n}\max\{0,1-y_iw_i^Tx\} + \frac{\lambda}{2}\|x\|^2.$$
Here, a stochastic subgradient oracle can be given by selecting a summand $i\in[n]$ uniformly at random and then setting 
$$g_h(x;i)=\begin{cases} -y_iw_i & \text{ if } 1-y_iw_i^Tx\geq 0 \\ 0 & \text{ otherwise},\end{cases}$$
which has $L^2 = \frac{1}{n}\sum_{i=1}^n\|w_i\|^2$. 

Much work has previously been done to give guarantees for SVMs. If one adds the constraint that $x$ lies in some large ball $Q$ (which will then be projected onto at each iteration), the classic strongly convex rate can be applied~\cite{pegasos}. A similar approach utilized in~\cite{Lacoste-Julien2012} is to show that, in expectation, all of the iterates of a stochastic subgradient method lie in a large ball (provided the initial iterate does).
The specialized mirror descent method proposed by Lu~\cite{Lu2017} gives convergence guarantees for SVMs at a rate of $O(1/\sqrt{T})$ without needing a bounding ball. Splitting methods and quasi-Newton methods capable of solving this problem are given in~\cite{DuchiYoram2009} and~\cite{Yu2010}, respectively, which both avoid needing to assume subgradient bounds.

\subsection{Interpreting \eqref{eq:stochastic-generalization} as a Quadratic Growth Upper Bound}\label{subsec:quadratic-growth-interpretation}
Here we give an alternative interpretation of bounding the size of subgradients by \eqref{eq:stochastic-generalization} on some convex open set $U\subseteq\RR^d$ for deterministic subgradient oracles. In particular, suppose all $x\in U$ have
\begin{equation}\label{eq:deterministic-steepness-bound}
\|g(x)\|^2 \leq L^2_0 + L_1(f(x)-\inf_{x'\in U}f(x'))
\end{equation}
First consider the classic model where $L_1=0$. This is equivalent to $f$ being $L_0$-Lipschitz continuous on $U$ and can be restated as the following upper bound holding for each $x\in U$
$$ f(y) \leq f(x) + L_0\|y-x\| \hskip0.5cm \left(\forall y\in U\right).$$

This characterization shows the limitation to linear growth of the classic model. In the following proposition, we give a upper bound characterization when $L_1>0$, which can be viewed as allowing up to quadratic growth.

\begin{proposition} \label{prop:steepness-growth-bounds}
A convex function $f$ satisfies \eqref{eq:deterministic-steepness-bound} on some open convex $U\subseteq\RR^d$ if and only if the following quadratic upper bound holds for each $x\in U$
$$ f(y) \leq f(x) + \frac{L_1}{4}\|y-x\|^2 + \|y-x\|\sqrt{L_1(f(x)-\inf_{x'\in U}f(x')) + L_0^2} \hskip0.5cm \left(\forall y\in U\right).$$	
\end{proposition}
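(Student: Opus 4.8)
The plan is to prove both implications, treating the ``only if'' direction (subgradient bound $\Rightarrow$ quadratic upper bound) as the substantive one and the ``if'' direction as a short limiting argument. Write $\phi := \inf_{x'\in U} f(x')$ for brevity. The conceptual key is to observe that the hypothesis \eqref{eq:deterministic-steepness-bound} says \emph{exactly} that the auxiliary function $F(x) := \sqrt{L_0^2 + L_1(f(x)-\phi)}$ is $\frac{L_1}{2}$-Lipschitz on $U$: wherever $f$ is differentiable the chain rule gives $\|\nabla F(x)\| = L_1\|\nabla f(x)\|/(2F(x)) \leq L_1/2$, precisely because \eqref{eq:deterministic-steepness-bound} is the statement $\|\nabla f(x)\|^2 \leq F(x)^2$. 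Squaring the resulting Lipschitz inequality $F(y) \leq F(x) + \frac{L_1}{2}\|y-x\|$, cancelling the common $L_0^2$, and dividing by $L_1$ then produces the claimed quadratic bound directly.

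For the forward direction I would first justify that $F$ is genuinely $\frac{L_1}{2}$-Lipschitz, rather than merely having small gradient almost everywhere. Since $f$ is convex and finite on the open set $U$, it is locally Lipschitz, hence differentiable almost everywhere and absolutely continuous along every segment contained in $U$; the same is then true of $F$, so the pointwise bound $\|\nabla F\|\le L_1/2$ integrates along segments to the global Lipschitz estimate on the convex set $U$. The one place where care is needed is the possible non-differentiability of the square root at points where $F$ vanishes, which can only occur when $L_0=0$ and $f$ attains its infimum. I would sidestep this by working with the regularized function $G_\varepsilon(x) := \sqrt{\varepsilon + L_0^2 + L_1(f(x)-\phi)}$, which is bounded below by $\sqrt{\varepsilon}>0$; the identical chain-rule computation gives $\|\nabla G_\varepsilon\|\le L_1/2$, so each $G_\varepsilon$ is $\frac{L_1}{2}$-Lipschitz, and letting $\varepsilon\to 0^+$ (with $G_\varepsilon \to F$ pointwise) transfers the estimate to $F$.

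The reverse direction is a direct limiting argument. Fixing $x\in U$ and $g\in\partial f(x)$, the subgradient inequality $f(y)\ge f(x)+g^T(y-x)$ combined with the assumed quadratic upper bound yields $g^T(y-x) \le \frac{L_1}{4}\|y-x\|^2 + \|y-x\|\sqrt{L_1(f(x)-\phi)+L_0^2}$ for all $y\in U$. Taking $y = x + t\,g/\|g\|$ for small $t>0$ (legitimate since $U$ is open, and the case $g=0$ being trivial), dividing by $t$, and sending $t\to 0^+$ annihilates the quadratic term and leaves $\|g\|\le \sqrt{L_1(f(x)-\phi)+L_0^2}$, which is \eqref{eq:deterministic-steepness-bound} after squaring.

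The main obstacle is thus entirely the forward direction's regularity bookkeeping: making the ``bounded gradient $\Rightarrow$ Lipschitz'' step rigorous for the nonsmooth convex $f$ and handling the degenerate locus where $F=0$; everything else is algebraic. For completeness I would also note the boundary case $L_1=0$, where the division by $L_1$ is vacuous and the statement reduces to the equivalence between $L_0$-Lipschitz continuity and the linear upper bound $f(y)\le f(x)+L_0\|y-x\|$ recalled immediately before the proposition.
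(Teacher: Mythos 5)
Your proof is correct, and at its core it is the same calculation as the paper's, but packaged in a cleaner way. The paper fixes $x,y$, restricts $f$ to the segment to get a one-dimensional convex function $h(t)=f(x+tv)-\inf_{x'\in U}f(x')$ satisfying $|h'(t)|\leq\sqrt{L_1h(t)+L_0^2}$ almost everywhere, and then simply cites ``standard calculus arguments'' to solve this differential inequality. Your observation that \eqref{eq:deterministic-steepness-bound} says precisely that $F=\sqrt{L_0^2+L_1(f-\phi)}$ is $\tfrac{L_1}{2}$-Lipschitz \emph{is} those standard calculus arguments, made explicit and global: integrating $\tfrac{d}{dt}\sqrt{L_1h+L_0^2}\leq \tfrac{L_1}{2}$ along the segment and squaring is exactly how one solves the paper's differential inequality. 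Your version has two genuine advantages: it fills in the step the paper waves at, and your $\varepsilon$-regularization $G_\varepsilon$ rigorously handles the degenerate locus where $F$ vanishes (only possible when $L_0=0$ at minimizers), which the paper ignores entirely. The reverse directions are essentially identical: the paper bounds $v^Tg\leq D_vu_x(x)$ via the directional derivative of the majorant, while you compute the same limit $t\to 0^+$ concretely along $y=x+t\,g/\|g\|$.

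One imprecision worth fixing in the forward direction: you derive the pointwise bound $\|\nabla F\|\leq L_1/2$ at points where $f$ is differentiable as a function on $\RR^d$, and then integrate along segments. But a segment is Lebesgue-null in $\RR^d$ for $d\geq 2$, so ``differentiable a.e.\ in $U$'' does not by itself give differentiability at a.e.\ point \emph{of the segment}. The clean repair is the paper's one-dimensional device: the restriction $h(t)=f(x+tv)$ is convex, hence differentiable at a.e.\ $t$, and at any such $t$ every subgradient $g\in\partial f(x+tv)$ satisfies $v^Tg=h'(t)$ (the two one-sided directional derivatives coincide there), so the oracle hypothesis yields $|h'(t)|\leq\|g(x+tv)\|\leq\sqrt{L_1h(t)+L_0^2}$, and your chain-rule estimate for $G_\varepsilon$ then applies along the segment. (Alternatively, Fubini gives the Lipschitz bound on a.e.\ parallel segment, and continuity of $F$ extends it to all of $U$.) This is a cosmetic gap, not a structural one; with that sentence added your argument is complete, and arguably more careful than the published proof.
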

\begin{proof}
	First we prove the forward direction. Consider any $x,y\in U$ and subgradient oracle $g(\cdot)$. Let $v=(y-x)/\|y-x\|$ denote the unit direction from $x$ to $y$, and $h(t) = f(x+tv)-\inf_{x'\in U}f(x')$ denote the restriction of $f$ to this line shifted to have nonnegative value. Notice that $h(0)=f(x)-\inf_{x'\in U}f(x')$ and $h(\|y-x\|) = f(y)-\inf_{x'\in U}f(x')$. The convexity of $h$ implies it is differentiable almost everywhere in the interval $[0,\|y-x\|]$. Thus $h$ satisfies the following, for almost every $t\in[0,\|y-x\|]$,
	\begin{align*}
		|h'(t)| =|v^Tg(x+tv)| \leq \|g(x+tv)\|. 
	\end{align*}
	
	This gives the differential inequality of
	$$ |h'(t)| \leq \sqrt{L_1h(t)+L^2_0}.$$	
	Standard calculus arguments imply
	$h(t) \leq  h(0) + \frac{L_1}{4}t^2 + t\sqrt{L_1 h(0) + L_0^2},$ which is equivalent to our claimed upper bound at $t=\|y-x\|$.
	
	Now we prove the reverse direction. Denote the upper bound given by some $x\in U$ as
	$$u_x(y):=f(x) + \frac{L_1}{4}\|y-x\|^2 + \|y-x\|\sqrt{L_1(f(x)-\inf_{x'\in U}f(x')) + L_0^2}.$$ Further, let $D_v$ denote the directional derivative operator in some unit direction $v\in \RR^d$. Then for any subgradient $g \in \partial f(x)$,
	$$v^Tg \leq D_v f(x) \leq D_v u_x(x),$$
	where the first inequality uses the definition of $D_v$ and the second uses the fact that $u_x$ upper bounds $f$. A simple calculation shows $D_v u_x(x) \leq \sqrt{L_1 (f(x)-\inf_{x'\in U}f(x')) +L_0^2}$. Then our subgradient bound follows by taking $v=g/\|g\|$.
\end{proof}

		\section{Convergence Proofs} \label{sec:proofs}
	Each of our extended convergence theorems follows from essentially the same proof as its classic counterpart.
	The central inequality in analyzing subgradient methods is the following.
	\begin{lemma} \label{lem:base-stochsatic-subgradient-inequality}
		Consider any convex function $f$. For any $x,y\in Q$ and $\alpha>0$,
		$$\EE_{\xi}\|P_Q(x - \alpha g(x;\xi)) - y\|^2 \leq \|x - y\|^2 - 2\alpha(\EE_{\xi}\ g(x;\xi))^T(x-y) + \alpha^2\EE_{\xi}\|g(x;\xi)\|^2.$$
	\end{lemma}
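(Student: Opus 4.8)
The plan is to prove this directly from the nonexpansiveness of the projection operator, following the standard subgradient descent analysis but carrying the stochastic oracle and conditional expectation through from the start. The target inequality is the ``one-step'' recursion that drives every convergence theorem in this paper, so the only real content is the projection bound plus expanding a square.

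First I would fix $x,y\in Q$ and $\alpha>0$, and write $z = P_Q(x-\alpha g(x;\xi))$. Since $y\in Q$ and orthogonal projection onto the closed convex set $Q$ is nonexpansive (equivalently, $\norm{P_Q(w)-y}\le \norm{w-y}$ for any $w$ and any $y\in Q$), I would bound the left-hand norm before taking expectations:
\begin{equation*}
\norm{P_Q(x-\alpha g(x;\xi))-y}^2 \le \norm{(x-\alpha g(x;\xi))-y}^2.
\end{equation*}
This is the step that removes the projection, and it is valid pointwise in $\xi$, so it survives taking $\EE_\xi$ of both sides.

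Next I would expand the right-hand square using $\norm{a-\alpha b}^2 = \norm{a}^2 - 2\alpha\,a^Tb + \alpha^2\norm{b}^2$ with $a = x-y$ and $b = g(x;\xi)$, giving
\begin{equation*}
\norm{(x-\alpha g(x;\xi))-y}^2 = \norm{x-y}^2 - 2\alpha\, g(x;\xi)^T(x-y) + \alpha^2\norm{g(x;\xi)}^2.
\end{equation*}
Taking $\EE_\xi$ and using linearity of expectation, the cross term becomes $-2\alpha(\EE_\xi\, g(x;\xi))^T(x-y)$ since $x,y$ are deterministic (the point $x$ is treated as fixed here, consistent with conditioning on the history in the later convergence proofs), and the final term becomes $\alpha^2\EE_\xi\norm{g(x;\xi)}^2$. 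Chaining the two bounds yields exactly the claimed inequality.

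The \emph{only} subtlety, and where I would be careful, is that the nonexpansiveness bound must be applied pointwise \emph{before} the expectation rather than after — one cannot interchange $P_Q$ with $\EE_\xi$ in general. Fortunately the inequality $\norm{P_Q(w)-y}^2\le\norm{w-y}^2$ holds for every realization of $\xi$, so monotonicity of expectation gives the result cleanly. I would note that convexity of $f$ itself is not actually invoked in this lemma; it is stated as a hypothesis only because convexity is used downstream (via $\EE_\xi g(x;\xi)\in\partial f(x)$) to lower-bound the cross term by an objective gap. Thus I expect no genuine obstacle here — this is the foundational computation, and its difficulty lies entirely in getting the expectation placement right.
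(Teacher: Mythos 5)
Your proposal is correct and follows essentially the same argument as the paper: apply nonexpansiveness of $P_Q$ pointwise in $\xi$ (using $y\in Q$ so $P_Q(y)=y$), expand the square, and take expectations. Your added remarks about expectation placement and the unused convexity hypothesis are accurate but do not change the substance.
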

	\begin{proof}
		Since orthogonal projection onto a convex set is nonexpansive, we have
		\begin{align*}
		\|P_Q(x - \alpha g(x;\xi)) - y\|^2 & \leq \|x - \alpha g(x;\xi) - y\|^2\\
		& = \|x - y\|^2 - 2\alpha g(x;\xi)^T(x-y) + \alpha^2\|g(x;\xi)\|^2.
		\end{align*}
		Taking the expectation over $\xi\sim D$ yields
		$$ \EE_{\xi}\|P_Q(x - \alpha g(x;\xi)) - y\|^2 \leq \|x - y\|^2 - 2\alpha (\EE_{\xi}\ g(x;\xi))^T(x-y) + \alpha^2\EE_{\xi}\|g(x;\xi)\|^2.\qedhere$$
	\end{proof}

	Let $D_k = \EE_{\xi_{0 \dots T}} \|x_k-x^*\|$ denote the expected distance from each iterate to the minimizer $x^*$. Each of our proofs follows the same general outline: use Lemma~\ref{lem:base-stochsatic-subgradient-inequality} to set up a telescoping inequality on $D_k$, then sum the telescope. We begin by proving Shor's convergence result as its derivation is short and informative.

	\subsection{Proof of Shor's Theorem~\ref{thm:shor}}
	From Lemma~\ref{lem:base-stochsatic-subgradient-inequality} with $x=x_k$, $y = x^*$, and $\alpha=\alpha_k/\|g(x_k)\|$, it follows that 
	\begin{align*}
	D_{k+1}^2 \leq D_k^2 - \frac{2\alpha_k g(x_k)^T(x_k-x^*)}{\|g(x_k)\|} + \alpha_k^2,
	\end{align*}
	Inductively applying this implies
	$$0 \leq D_{T+1}^2 \leq D_0^2 - \sum_{k=0}^T 2\alpha_k\frac{g(x_k)^T(x_k-x^*)}{\|g(x_k)\|} + \sum_{k=0}^T \alpha_k^2.$$
	Thus
	$$\min_{k=0 \dots T}\left\{\frac{g(x_k)^T(x_k-x^*)}{\|g(x_k)\|}\right\} \leq \frac{D_0^2 + \sum_{k=0}^{T} \alpha_k^2}{2\sum_{k=0}^{T} \alpha_k},$$
	completing the proof.
	\hfill\ensuremath{\square}

	\subsection{Proof of Theorem~\ref{thm:extended-deterministic-subgradient-rate}}
	This follows directly from Theorem~\ref{thm:shor}. Note the result trivially holds if some $0\leq k\leq T$ has $x_k\in X^*$. Suppose $x_k$ satisfies the inequality in Theorem~\ref{thm:shor}. Let $y$ be the closest point in $\{x\ |\ g(x_k)^T(x-x_k)=0\}$ to $x^*$. Then our assumed growth bound implies
	$$f(y) - f^* \leq \mathcal{D}(\|y-x^*\|) \leq \mathcal{D}\left(\frac{D_0^2 + \sum_{k=0}^T \alpha_k^2}{2\sum_{k=0}^T \alpha_k}\right).$$
	The convexity of $f$ implies $f(x_k)\leq f(y)$ completing the proof. \hfill\ensuremath{\square}

	\subsection{Proof of Theorem~\ref{thm:extended-stochastic-subgradient-rate}}
	From Lemma~\ref{lem:base-stochsatic-subgradient-inequality} with $x=x_k$, $y = x^*$, and $\alpha=\alpha_k$, it follows that 
	\begin{align*}
		D_{k+1}^2 & \leq D_k^2 - \EE_{\xi_{0 \dots T}}\left[2\alpha_k (\EE_{\xi}g(x;\xi))^T(x_k-x^*)\right] + \alpha_k^2\EE_{\xi_{0 \dots T}}\|g(x_k,\xi_k)\|^2 \\
		& \leq D_k^2 - \EE_{\xi_{0 \dots T}}\left[(2\alpha_k -L_1\alpha_k^2)(f(x_k)-f^*)\right] + L_0^2\alpha_k^2,
	\end{align*}
	where the second inequality uses the convexity of $f$ and the bound on $\EE_{\xi}\|g(x;\xi)\|^2$. Inductively applying this implies
	$$0 \leq D_{T+1}^2 \leq D_0^2 - \EE_{\xi_{0 \dots T}}\left[\sum_{k=0}^T (2\alpha_k -L_1\alpha_k^2)(f(x_k)-f^*)\right] + L_0^2\sum_{k=0}^T \alpha_k^2.$$
	The convexity of $f$ gives
	$$\EE_{\xi_{0 \dots T}}\left[f\left(\frac{\sum_{k=0}^{T}\alpha_k(2-L_1\alpha_k)x_k}{\sum_{k=0}^{T}\alpha_k(2-L_1\alpha_k)}\right)-f^*\right] \leq \frac{D_0^2 + L_0^2\sum_{k=0}^{T} \alpha_k^2}{\sum_{k=0}^{T} \alpha_k(2-L_1\alpha_k)},$$
	completing the proof.
	\hfill\ensuremath{\square}

	\subsection{Proof of Theorem~\ref{thm:extended-strong-stochastic-subgradient-rate}}
	Our proof follows the style of~\cite{Lacoste-Julien2012}. Observe that our choice of step size $\alpha_k$ satisfies the following pair of conditions. First, note that it is a solution to the recurrence
	\begin{equation} \label{eq:first-stong-convexity}
	(k+1)\alpha_k^{-1}=(k+2)(\alpha^{-1}_{k+1}-\mu).
	\end{equation}
	Second, note that $L_1\alpha_k \leq 1$ for all $k\geq 0$ since
	\begin{equation} \label{eq:second-strong-convexity}
	L_1\alpha_k = \frac{2\mu(k+2) L_1}{(\mu(k+2))^2 + \frac{k+2}{k+1}L_1^2} \leq \frac{2\mu(k+2) L_1}{(\mu(k+2))^2 + L_1^2} \leq 1.
	\end{equation}
	
	From Lemma~\ref{lem:base-stochsatic-subgradient-inequality} with $x=x_k$, $y = x^*$, and $\alpha=\alpha_k$, it follows that 
	\begin{align*}
	D_{k+1}^2 & \leq D_k^2 - \EE_{\xi_{0 \dots T}}\left[2\alpha_k (\EE_{\xi}g(x_k;\xi))^T(x_k-x^*)\right] + \alpha_k^2\EE_{\xi_{0 \dots T}}\|g(x_k,\xi_k)\|^2 \\
	& \leq (1-\mu\alpha_k)D_k^2 - \EE_{\xi_{0 \dots T}}\left[(2\alpha_k -L_1\alpha_k^2)(f(x_k)-f^*)\right] + L_0^2\alpha_k^2,
	\end{align*}
	where the second inequality uses the strong convexity of $f$ and the bound on $\EE_{\xi}\|g(x;\xi)\|^2$. Multiplying by $(k+1)/\alpha_k$ yields
	\begin{align*}
	(k+1)\alpha_k^{-1}D_{k+1}^2 \leq & (k+1)(\alpha_k^{-1}-\mu)D_k^2 \\ &- \EE_{\xi_{0 \dots T}}\left[(k+1)(2 -L_1\alpha_k)(f(x_k)-f^*)\right] + L_0^2(k+1)\alpha_k.
	\end{align*}

	Notice that this inequality telescopes due to \eqref{eq:first-stong-convexity}. Inductively applying this implies
	\begin{equation*} 
	0 \leq (\alpha^{-1}_0-\mu) D_0^2-\EE_{\xi_{0 \dots T}}\left[\sum_{k=0}^T (k+1)(2 -L_1\alpha_k)(f(x_k)-f^*)\right] + L_0^2\sum_{k=0}^T (k+1)\alpha_k.
	\end{equation*}
	Since $\sum_{k=0}^{T} (k+1)\alpha_k \leq 2(T+1)/\mu$ and $\alpha^{-1}_0-\mu = L_1^2/2\mu$, we have
	\begin{equation*}
	\EE_{\xi_{0 \dots T}}\left[\sum_{k=0}^T (k+1)(2 -L_1\alpha_k)(f(x_k)-f^*)\right] \leq \frac{L_1^2D_0^2}{2\mu} + \frac{2L_0^2(T+1)}{\mu}.
	\end{equation*}
	Observe that the coefficients of each $f(x_k)-f^*$ above are positive due to \eqref{eq:second-strong-convexity}. Then the convexity of $f$ gives our first convergence bound.
	From \eqref{eq:second-strong-convexity}, we know $2-L_1\alpha_k\geq 1$ for all $k\geq 0$. Then the previous inequality can be weakened to 
	$$\EE_{\xi_{0 \dots T}}\left[\sum_{k=0}^T (k+1)(f(x_k)-f^*)\right] \leq \frac{L_1^2D_0^2}{2\mu} + \frac{2L_0^2(T+1)}{\mu}.$$
	The convexity of $f$ gives our second convergence bound.
	\hfill\ensuremath{\square}

	\section{Improved Convergence Without Strong Convexity}\label{sec:extensions}
Strong convexity is stronger than necessary to achieve many of the standard improvements in convergence rate for smooth optimization problems \cite{Bolte2017,Dima2016,Luo1993,Necoara-QuadraticGrowth}. Instead the weaker condition of requiring quadratic growth away from the set of minimizer suffices. We find that this weaker condition is also sufficient for \eqref{eq:Stochastic-Subgradient-Method} to have a convergence rate of $O(1/T)$.

We say a function $f$ has $\mu$-quadratic growth for some $\mu>0$ if all $x\in Q$ satisfy
\begin{equation*}
f(x) \geq f^* + \frac{\mu}{2}\dist(x,X^*)^2. \label{eq:quadratically-sharp}
\end{equation*}
The proof of Theorem \ref{thm:extended-strong-stochastic-subgradient-rate} only uses strong convexity once for the following inequality:
$$ g(x_k)^T(x_k-x^*) \geq f(x_k) - f^* + \frac{\mu}{2}\|x_k-x^*\|^2.$$
Having $\mu$-quadratic growth suffices to give a similar inequality, weakened by a factor of $1/2$:
$$ g(x_k)^T(x_k-P_{X^*}(x_k)) \geq f(x_k) - f^* \geq \frac{1}{2}\left(f(x_k) - f^*\right) + \frac{\mu}{4}\dist(x_k,X^*)^2.$$
Then simple modifications of the proof of Theorem~\ref{thm:extended-strong-stochastic-subgradient-rate} give a $O(1/T)$ convergence rate.

\begin{theorem}
	Consider any convex function $f$ with $\mu$-quadratic growth and stochastic subgradient oracle satisfying \eqref{eq:stochastic-generalization}. Then for the decreasing sequence of step sizes $$\alpha_k = \frac{4}{\mu(k+2) + \frac{4L_1^2}{\mu(k+1)}},$$ the iteration \eqref{eq:Stochastic-Subgradient-Method} satisfies
	\begin{equation*}
	\EE_{\xi_{0 \dots T}}\left[f\left(\frac{\sum_{k=0}^{T}(k+1)(1-L_1\alpha_k)x_k}{\sum_{k=0}^{T}(k+1)(1-L_1\alpha_k)}\right)-f^*\right]  \leq \frac{4L_0^2(T+1) + L_1^2\dist(x_0,X^*)^2}{\mu\sum_{k=0}^{T}(k+1)(1-L_1\alpha_k)}.
	\end{equation*}
\end{theorem}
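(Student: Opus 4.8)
The plan is to re-run the proof of Theorem~\ref{thm:extended-strong-stochastic-subgradient-rate} almost verbatim, making exactly two substitutions: replace the fixed minimizer $x^*$ by the moving reference point $P_{X^*}(x_k)$, and track $D_k^2 := \EE_{\xi_{0 \dots T}}\dist(x_k,X^*)^2$ in place of the squared distance to a single point. First I would apply Lemma~\ref{lem:base-stochsatic-subgradient-inequality} with $x=x_k$ held fixed and the ($x_k$-dependent) choice $y=P_{X^*}(x_k)\in X^*$. Using $\dist(x_{k+1},X^*)^2\leq\|x_{k+1}-P_{X^*}(x_k)\|^2$ on the left and $\|x_k-P_{X^*}(x_k)\|^2=\dist(x_k,X^*)^2$ on the right, and then taking the outer expectation, this gives
$$D_{k+1}^2 \leq D_k^2 - 2\alpha_k\EE_{\xi_{0 \dots T}}\left[(\EE_\xi g(x_k;\xi))^T(x_k-P_{X^*}(x_k))\right] + \alpha_k^2\EE_{\xi_{0 \dots T}}\|g(x_k;\xi_k)\|^2.$$

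Next I would insert the two inequalities already recorded before the theorem statement. Convexity gives $(\EE_\xi g(x_k;\xi))^T(x_k-P_{X^*}(x_k))\geq f(x_k)-f^*$, and splitting this in half and applying $\mu$-quadratic growth to one half bounds it below by $\tfrac12(f(x_k)-f^*)+\tfrac{\mu}{4}\dist(x_k,X^*)^2$. Combining with the subgradient bound \eqref{eq:stochastic-generalization} produces the recurrence
$$D_{k+1}^2 \leq \left(1-\tfrac{\mu\alpha_k}{2}\right)D_k^2 - \EE_{\xi_{0 \dots T}}\left[\alpha_k(1-L_1\alpha_k)(f(x_k)-f^*)\right] + L_0^2\alpha_k^2,$$
which is precisely the strong-convex recurrence with $\mu$ halved and the factor $(2-L_1\alpha_k)$ replaced by $(1-L_1\alpha_k)$. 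This explains both the stated form of $\alpha_k$ and the averaging weights $(k+1)(1-L_1\alpha_k)$.

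From here the telescoping argument of Theorem~\ref{thm:extended-strong-stochastic-subgradient-rate} goes through mechanically. Multiplying by $(k+1)/\alpha_k$ makes the coefficient of $D_k^2$ equal $(k+1)(\alpha_k^{-1}-\mu/2)$, and the given $\alpha_k$ is exactly the solution of the halved-$\mu$ recurrence $(k+1)\alpha_k^{-1}=(k+2)(\alpha_{k+1}^{-1}-\mu/2)$, the analogue of \eqref{eq:first-stong-convexity}; so the $D_k^2$ terms telescope and the nonnegative $D_{T+1}^2$ term is dropped. I would then verify the analogue of \eqref{eq:second-strong-convexity}, namely $L_1\alpha_k\leq 1$, via an AM--GM bound on the denominator of $\alpha_k$, guaranteeing each weight $(k+1)(1-L_1\alpha_k)$ is nonnegative. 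Evaluating the two surviving constants, $\alpha_0^{-1}-\mu/2=L_1^2/\mu$ and $\sum_{k=0}^T(k+1)\alpha_k\leq 4(T+1)/\mu$, yields
$$\sum_{k=0}^T (k+1)(1-L_1\alpha_k)\EE_{\xi_{0 \dots T}}[f(x_k)-f^*] \leq \frac{4L_0^2(T+1)+L_1^2\dist(x_0,X^*)^2}{\mu},$$
and a final application of Jensen's inequality with these weights delivers the stated bound.

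The only genuinely new ingredient is the moving reference point, and this is where I would be most careful. Since $P_{X^*}(x_k)$ depends on the past samples $\xi_{0 \dots k-1}$, one cannot simply fix a single minimizer and reuse Lemma~\ref{lem:base-stochsatic-subgradient-inequality}; instead the lemma must be invoked conditionally on $x_k$, taking expectation only over the fresh sample $\xi_k$, before the tower rule is applied. The inequality $\dist(x_{k+1},X^*)^2\leq\|x_{k+1}-P_{X^*}(x_k)\|^2$ is exactly what lets a per-iteration bound measured against $P_{X^*}(x_k)$ close back into a recursion on the set-distance $D_k^2$. Everything beyond this is a routine re-run of the strong-convex computation under the substitutions $\mu\mapsto\mu/2$ and $(2-L_1\alpha_k)\mapsto(1-L_1\alpha_k)$.
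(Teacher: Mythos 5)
Your proposal is correct and follows essentially the same route as the paper's own proof: the same application of Lemma~\ref{lem:base-stochsatic-subgradient-inequality} with the moving reference point $y=P_{X^*}(x_k)$, the same halved-$\mu$ recurrence and weights $(k+1)(1-L_1\alpha_k)$, the same telescoping and constants, and a final Jensen step. If anything, you are slightly more careful than the paper on two points it leaves implicit --- invoking the lemma conditionally on $x_k$ before applying the tower rule (since $P_{X^*}(x_k)$ depends on past samples), and the inequality $\dist(x_{k+1},X^*)^2\leq\|x_{k+1}-P_{X^*}(x_k)\|^2$ that closes the recursion on the set distance.
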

\begin{proof}
	Observe that our choice of step size $\alpha_k$ satisfies the following pair of conditions. First, note that it is a solution to the recurrence
	\begin{equation} \label{eq:first-quadratic-growth}
	(k+1)\alpha_k^{-1}=(k+2)(\alpha^{-1}_{k+1}-\mu/2).
	\end{equation}
	Second, note that $L_1\alpha_k < 1$ for all $k\geq 0$. This follows as
	\begin{equation} \label{eq:second-quadratic-growth}
	L_1\alpha_k = \frac{4\mu(k+2) L_1}{(\mu(k+2))^2 + 4\frac{k+2}{k+1}L_1^2} \leq \frac{4\mu(k+2) L_1}{(\mu(k+2))^2 + (2L_1)^2} \leq 1,
	\end{equation}
	where the first inequality is strict if $L_1>0$ and the second inequality is strict if $L_1=0$.
	
	Let $D_k = \EE_{\xi_{0 \dots T}} \dist(x_k,X^*)$ denote the expected distance from each iterate to the set of minimizers, $X^*$. From Lemma~\ref{lem:base-stochsatic-subgradient-inequality} with $x=x_k$, $y = P_{X^*}(x_k)$, and $\alpha=\alpha_k$, it follows that 
	\begin{align*}
	D_{k+1}^2 & \leq D_k^2 - \EE_{\xi_{0 \dots T}}\left[2\alpha_k (\EE_{\xi}g(x_k;\xi))^T(x_k-x^*)\right] + \alpha_k^2\EE_{\xi_{0 \dots T}}\|g(x_k,\xi_k)\|^2 \\
	& \leq (1-\mu\alpha_k/2)D_k^2 - \EE_{\xi_{0 \dots T}}\left[(\alpha_k -L_1\alpha_k^2)(f(x_k)-f^*)\right] + L_0^2\alpha_k^2,
	\end{align*}
	where the second inequality uses the quadratic growth of $f$ and the bound on $\EE_{\xi}\|g(x;\xi)\|^2$. Multiplying by $(k+1)/\alpha_k$ yields
	\begin{align*}
	(k+1)\alpha_k^{-1}D_{k+1}^2 \leq & (k+1)(\alpha_k^{-1}-\mu/2)D_k^2 \\ & - \EE_{\xi_{0 \dots T}}\left[(k+1)(1 -L_1\alpha_k)(f(x_k)-f^*)\right] + L_0^2(k+1)\alpha_k.
	\end{align*}

	Notice that this inequality telescopes due to \eqref{eq:first-quadratic-growth}. Inductively applying this implies
	\begin{equation*} 
	0 \leq (\alpha^{-1}_0-\mu/2) D_0^2-\EE_{\xi_{0 \dots T}}\left[\sum_{k=0}^T (k+1)(1 -L_1\alpha_k)(f(x_k)-f^*)\right] + L_0^2\sum_{k=0}^T (k+1)\alpha_k.
	\end{equation*}
	Since $\sum_{k=0}^{T} (k+1)\alpha_k \leq 4(T+1)/\mu$ and $\alpha^{-1}_0-\mu/2 = L_1^2/\mu$, we have
	\begin{equation*}
	\EE_{\xi_{0 \dots T}}\left[\sum_{k=0}^T (k+1)(1 -L_1\alpha_k)(f(x_k)-f^*)\right] \leq \frac{L_1^2D_0^2}{\mu} + \frac{4L_0^2(T+1)}{\mu}.
	\end{equation*}
	Observe that the coefficients of each $f(x_k)-f^*$ above are positive due to \eqref{eq:second-quadratic-growth}. Then the convexity of $f$ completes the proof.
\end{proof}

	\paragraph{Acknowledgments.} The author thanks Jim Renegar for providing feedback on an early draft of this work.
	
	\bibliographystyle{plain}
	{\small \bibliography{references}}

\end{document}